\documentclass[11pt]{article}

\usepackage[left=2.cm, right=2.cm, top=2.5cm, bottom=2.5cm]{geometry}
\usepackage{amsmath,amssymb,amsfonts,amsthm,mathtools}
\usepackage[T1]{fontenc}
\usepackage{lmodern}
\usepackage{microtype}
\usepackage{enumitem}
\usepackage{aliascnt}
\usepackage{xcolor}
\usepackage[colorlinks=true,linkcolor=blue!55!black,citecolor=blue!55!black,urlcolor=blue!55!black]{hyperref}
\usepackage[nameinlink,capitalize]{cleveref}
\numberwithin{equation}{section}
\usepackage{authblk}

\newtheorem{theorem}{Theorem}[section]
\newaliascnt{proposition}{theorem}
\newtheorem{proposition}[proposition]{Proposition}
\aliascntresetthe{proposition}
\newaliascnt{lemma}{theorem}
\newtheorem{lemma}[lemma]{Lemma}
\aliascntresetthe{lemma}
\theoremstyle{definition}
\newaliascnt{definition}{theorem}
\newtheorem{definition}[definition]{Definition}
\aliascntresetthe{definition}
\newaliascnt{example}{theorem}
\newtheorem{example}[example]{Example}
\aliascntresetthe{example}
\theoremstyle{remark}
\newaliascnt{remark}{theorem}
\newtheorem{remark}[remark]{Remark}
\aliascntresetthe{remark}
\crefname{theorem}{Theorem}{Theorems}
\Crefname{theorem}{Theorem}{Theorems}
\crefname{proposition}{Proposition}{Propositions}
\Crefname{proposition}{Proposition}{Propositions}
\crefname{lemma}{Lemma}{Lemmas}
\Crefname{lemma}{Lemma}{Lemmas}
\crefname{definition}{Definition}{Definitions}
\Crefname{definition}{Definition}{Definitions}
\crefname{example}{Example}{Examples}
\Crefname{example}{Example}{Examples}
\crefname{remark}{Remark}{Remarks}
\Crefname{remark}{Remark}{Remarks}

\newcommand{\E}{\mathbb E}
\newcommand{\Pp}{\mathbb P}
\newcommand{\C}{\mathbb C}
\newcommand{\R}{\mathbb R}
\newcommand{\Tr}{\operatorname{Tr}}
\newcommand{\op}{\mathrm{op}}
\newcommand{\msc}{m_{\mathrm{sc}}}
\newcommand{\musc}{\mu_{\mathrm{sc}}}
\newcommand{\sigstar}{\sigma_N^*}
\newcommand{\1}{\mathbf 1}
\newcommand{\dd}{\,\mathrm d}
\newcommand{\norm}[1]{\left\lVert #1\right\rVert}
\newcommand{\ip}[2]{\left\langle #1,#2\right\rangle}
\newcommand{\bigp}[2]{\big\langle #1,#2\big\rangle}

\newcommand{\xtoP}{\xrightarrow{\Pp}}

\title{BBP Transition for Deformed Inhomogeneous Random Matrices}

\author[1]{Ruohan Geng}
\author[2]{Zhantao He}
\author[3]{Dang-Zheng Liu}
\author[4]{Guangyi Zou}

\affil[1]{Institut de Math\'ematiques de Marseille, Aix-Marseille Universit\'e, \texttt{ruohan.GENG@univ-amu.fr}}
\affil[2]{School of Mathematical Sciences, Peking University, \texttt{zthe25@stu.pku.edu.cn}}
\affil[3]{School of Mathematical Sciences, University of Science and Technology of China, \texttt{dzliu@ustc.edu.cn}}
\affil[4]{Department of Mathematics, University of California, Irvine, \texttt{zouguangyi2001@gmail.com}}
\date{}

\begin{document}
\maketitle

\begin{abstract}

We study finite-rank deformations of inhomogeneous random symmetric/Hermitian matrices
 $H_N=(\sigma_{ij}W_{ij})_{i,j=1}^N,$
whose variance profile satisfies
\(
\sum_j\sigma_{ij}^2=1.
\)
Writing \(
\sigma_N^*=\max_{i,j}\sigma_{ij}
\) for the matrix sparsity parameter,
we establish the BBP transition of outlier eigenvalues at the level of the law of large numbers in  two regimes of moment conditions on $\{W_{ij}\}$:
(1)
under a uniform sub-Weibull bound with exponent \(0<\alpha\le 2\) and the sharp assumption
     $  \sigma_N^*(\log N)^{1/\alpha}\to 0,$
    the convergence holds almost surely;
   (2) under uniformly integrable $p$-th powers for some \(p\ge 4\) and  the sharp assumption
   $N(\sigma_N^*)^{p-2}=O(1),$
    the convergence holds in probability.
 Our approach combines sharp norm estimates, fixed-degree polynomial isotropic laws, and a finite-dimensional resolvent equation.
\end{abstract}

\section{Introduction}\label{sec:intro}
\subsection{Deformed Wigner matrices}

In his Selected Papers of Freeman Dyson with Commentary \cite{dyson1996selected}, Dyson remarked that the physical motivation for introducing matrix-valued Brownian motion in \cite{dyson1962brownian} is that it represents a system whose Hamiltonian is the sum of two parts, one known and one unknown. This observation serves as the starting point for deformed random matrix models, in which a Hamiltonian is described as the sum of a known deterministic part and a random part. A canonical example is obtained by adding independent Gaussian noise to a Wigner matrix, yielding a Gaussian-divisible ensemble that, conditional on the initial matrix, is a deterministic deformation of a Gaussian ensemble. Such models have played a key role in the study of universality in random matrix theory \cite{erdHos2010bulk,johansson2001universality,tao2011random}, and they also exhibit outlier and phase transition phenomena \cite{baik2005phase}. These features highlight the richness of eigenvalue statistics in random matrix theory and have attracted considerable interest owing to their broad applications in statistics, mathematical physics, random graphs, and signal processing. For a comprehensive overview, see the excellent survey \cite{peche2014deformed}.

To illustrate the outlier phenomenon in deformed random matrix models, we consider a rank-one perturbation of a GOE matrix 
\[
 X_N^{\mathrm{GOE}}=H_N^{\mathrm{GOE}}+\frac{a}{N}\1_N\1_N^*,
 \qquad a>0,
\]
where  $H_N^{\mathrm{GOE}}$ denotes  the $N\times N$ normalized Gaussian
orthogonal ensemble with off-diagonal variance $1/N$, and  $\1_N$ is the column vector of ones.  
Let \(\lambda_1(X_N)\ge\cdots\ge\lambda_N(X_N)\) be the ordered eigenvalues of \(X_N^{\mathrm{GOE}}\). 
  The combination of the  
 Wigner's semicircle law~\cite{Wigner} and the finite-rank inequality~\cite[Theorem~A.43]{BS} shows that its empirical spectral measure,  defined by 
\[
  \mu_{X_N}:=\frac1N\sum_{j=1}^N\delta_{\lambda_j(X_N)},
\]
still converges weakly, almost surely, to
\begin{equation}\label{eq:semicircle}
 \musc(\dd x)=\frac{1}{2\pi}\sqrt{4-x^2}\, \1_{\{|x|\leq2\}}\dd x.
\end{equation}
However, when 
$a
>1$, such a deformation may create a single outlier eigenvalue separating from the bulk spectrum, located at the value
$a+{1}/{a}$.
 This exact formula was first derived in  \cite{jones1978eigenvalue}, and was later independently rediscovered in  \cite{furedi1981eigenvalues} in a more general setting motivated by applications in random graphs.
   For exact results on rank-one perturbations, and for a detailed review of the integrable structures hidden in rank-one perturbed random matrices, we refer to \cite{forrester2023rank}.

The transition for outlier eigenvalues was first established for spiked complex Wishart ensembles with arbitrary finite-rank perturbations, and is now well known as the BBP phase transition after the seminal work of Baik, Ben Arous, and P\'ech\'e \cite{baik2005phase}, both at the level of the law of large numbers and of fluctuations. The additive counterpart and the critical transition for deformed Gaussian and spiked Wishart ensembles were studied in \cite{peche2006largest,bloemendal2013limits,bloemendal2016limits}. We refer to \cite{CDF,capitaine2012central,benaych2011fluctuations} and the references therein for more results on fluctuations. Beyond Gaussian ensembles, the study of finite-rank deformations has developed through high trace moments \cite{FeralPeche2007}, integral transforms of the limiting spectral measure \cite{BGN}, and isotropic resolvent estimates \cite{knowles2013isotropic,knowles2014outliers}. Fluctuation results cover random perturbations with delocalized eigenvectors \cite{benaych2011fluctuations}, deterministic finite-rank perturbations \cite{pizzo2013finite}, and delocalized spike directions \cite{renfrew2013finite}. For spiked covariance matrices, almost-sure eigenvalue limits and central limit theorems are available in \cite{baik2006eigenvalues,bai2008central}; information-plus-noise and rectangular models are treated in \cite{loubaton2011almost,capitaine2014exact,benaych2012singular,ding2022edge}.

\subsection{Inhomogeneous random matrices}
Inhomogeneous random matrices admit a general variance profile and encompass random band matrices as well as Wigner matrices restricted to regular graphs. Such models not only possess richer mathematical structure but are also indispensable for modeling real-world data and physical systems—spanning functional analysis \cite{davidson2001local} and mathematical physics \cite{spencer2011random}, through applied mathematics ranging from numerical linear algebra to computer and data science—where random matrices are often strongly inhomogeneous and devoid of inherent symmetries \cite{bandeira2025topics,tropp2015introduction,vershynin2018high}. Consequently, developing a general theory characterizing the spectral properties of arbitrarily structured inhomogeneous random matrices has become a central challenge.
Recently, inhomogeneous random matrices have attracted substantial interest, with major advances concerning spectral norms \cite{LVHY,van2017structured,BvH,adamczak2024norms}, matrix concentration and deviation inequalities \cite{MR4635836,brailovskaya2024extremal,MR4823211}, and spectral outliers \cite{bandeira2024matrixconcentrationinequalitiesfree,MR4234995}. For the study of local spectral properties, geometry and sparsity constitute two fundamental structural features of these models; we refer the reader to the surveys \cite{spencer2011random,van2017structured,bourgade2018random}. In particular, for random band matrices, the extreme eigenvalues depend on the bandwidth and spatial dimension \cite{sodin2010spectral,liu2023edge}, demonstrating that the variance profile can influence local spectral behavior.

Specifically, our object of study is the model \(H_N=(\sigma_{ij}W_{ij})\), where the variance profile satisfies \(\sum_j\sigma_{ij}^2=1\) for all \(i\). We write the sparsity measure \(\sigma_N^*:=\max_{i,j}\sigma_{ij}\). This parameter measures the largest entry scale relative to the total row variance. Independence, centering, asymptotically unit row variances, and a Lindeberg condition give the semicircle law when \(\sigma_N^*\to0\); see \cite{gotze2015limit,Chin}.
This global limit alone does not control isolated eigenvalues. For real symmetric matrices with sub-Gaussian entries, a sharp criterion for the absence of   outliers is established   in \cite[Theorem~1.2]{ASTY}:
\[
 \sigma_N^*\sqrt{\log N}\longrightarrow0
 \quad\Longrightarrow\quad
 \norm{H_N}_{\op}\longrightarrow2\qquad\text{almost surely}.
\]
For sub-Weibull entries with \(0<\alpha\le2\), \cite[Remark~2.2]{ASTY} gives the undeformed norm limit under \(\sigma_N^*(\log N)^{1/\alpha}\to0\). Earlier edge bounds for band and sparse matrices appear in \cite{khorunzhiy2008estimates,benaych2014largest,sodin2010spectral}.

For finite-rank deformations of periodic band matrices, Au \cite[Theorem~1.3]{Au} proved the almost-sure eigenvalue and eigenvector BBP transition for periodic band matrices with arbitrary deterministic spike directions. The assumptions are uniform bounds on all fixed-order moments of the entries and a bandwidth \(W\gg N^{\varepsilon}\) for some \(\varepsilon>0\). For Gaussian band matrices, convergence in probability also holds when the bandwidth grows faster than \(\log N\) \cite[Remark~1.4]{Au}. However, a natural question is whether, and how, the entry tails affect the BBP law of large numbers when the perturbation rank is held fixed.
For uniformly weighted \(k\)-regular supports, the direct application in \cite[Example~3.2]{bandeira2024matrixconcentrationinequalitiesfree} gives the rank-one eigenvalue and eigenvector BBP limits for uniformly bounded entries under \(k\gg(\log N)^4\).

\subsection{Model and main results}\label{sec:model}

In this paper, we study real symmetric or complex Hermitian inhomogeneous random matrices subject to a deterministic finite-rank deformation. 
\begin{definition}[Deformed inhomogeneous random matrices]\label{def:profile}
For each $N$, let 
\begin{align}
 H_N&=\Sigma_N\circ W_N,\label{eq:model-H}\\
 X_N&=H_N+A_N,\label{eq:deformation}
\end{align}
where $\circ$ denotes entrywise product, and the three   $N\times N$ matrices  $W_N$, $\Sigma_N$, and $A_N$  satisfy the following assumptions.
\begin{enumerate}[label=\textbf{(A\arabic*)},ref=A\arabic*,leftmargin=3.4em]
\item\label{ass:noise} \textbf{Noise.} The matrix $W_N=(W_{ij})$ is real symmetric or complex Hermitian. Its entries on and above the diagonal are independent and centered. In the real case,
\[
 \E\big[W_{ii}^2\big]=2,\qquad \E\big[W_{ij}^2\big]=1\quad(i<j).
\]
In the complex case, the diagonal entries are real and
\[
 \E\big[W_{ii}^2\big]=1,\qquad
 \E\big[|W_{ij}|^2\big]=1\quad(i<j).
\]
The entry laws may depend on $N,i,j$ and need not be identical or symmetric in distribution. 

\item\label{ass:profile} \textbf{Variance profile.} The matrix $\Sigma_N=(\sigma_{ij})$ is deterministic, symmetric and nonnegative. Its squared entries form a stochastic matrix:
\begin{equation}\label{eq:stochastic}
 \sum_{j=1}^N\sigma_{ij}^2=1\quad(1\leq i\leq N),
 \qquad \sigstar:=\max_{i,j}\sigma_{ij}.
\end{equation}

\item\label{ass:deformation} \textbf{Finite-rank deformation.} For a fixed integer $r\geq1$ and all $N\geq r$,
\[
 A_N=V_N\Theta V_N^*,\qquad
 V_N^*V_N=I_r,
\]
where $V_N$ is a deterministic $N \times r$ real (resp.\ complex)
matrix and $\Theta$ is a fixed real diagonal $r \times r$ matrix. The columns of $V_N$ may depend on $N$ and have arbitrary support.
\end{enumerate}
\end{definition}

To emphasize the dependence on 
$N$, we shall  sometimes write 
  $W_N=(W_{ij}^{(N)})$. In addition, we impose one of the following two conditions on the entries in \eqref{ass:noise} and the profile in \eqref{ass:profile}. 

\begin{definition}\label{def:entry-regimes}
\leavevmode
\begin{enumerate}[label=\textbf{(FM$_p$)},ref={FM$_p$},leftmargin=4.7em]
\item\label{ass:FM} For some $p\geq4$, the $p$th powers of the entries are uniformly integrable:
\begin{equation}\label{eq:FM-UI}
 \lim_{R\to\infty}\sup_{N\geq1}\sup_{1\leq i\leq j\leq N}
 \E\!\left[|W_{ij}^{(N)}|^p\1_{\{|W_{ij}^{(N)}|>R\}}\right]=0,
\end{equation}
and
\begin{equation}\label{eq:FM-scale}
 \sup_N N(\sigstar)^{p-2}<\infty.
\end{equation}
\end{enumerate}
\begin{enumerate}[label=\textbf{(SW$_\alpha$)},ref={SW$_\alpha$},leftmargin=4.7em]
\item\label{ass:SW} For $\alpha>0$ and a real or complex random variable $Z$, set
\begin{equation}\label{eq:psi-alpha}
 \norm{Z}_{\psi_\alpha}:=
 \inf\bigl\{s>0:\E\big[\exp(|Z|^\alpha/s^\alpha) \big]\leq2\bigr\}.
\end{equation}
For some $K<\infty$ and $0<\alpha\leq2$,
\begin{equation}\label{eq:SW-norm}
 \sup_{N\geq1}\sup_{1\leq i\leq j\leq N}
 \norm{W_{ij}^{(N)}}_{\psi_\alpha}\leq K,
\end{equation}
and
\begin{equation}\label{eq:SW-scale}
 \sigstar(\log N)^{1/\alpha}\longrightarrow0.
\end{equation}
\end{enumerate}
\end{definition}

List the positive and negative eigenvalues of $\Theta$, with multiplicities, as
\begin{equation}\label{eq:spike-order}
 a_1\geq\cdots\geq a_{r_+}>0,
 \qquad b_1\leq\cdots\leq b_{r_-}<0.
\end{equation}
These eigenvalues are fixed, independent of $N$.
Our central result is the following BBP transition at the level of the law of large numbers. 
\begin{theorem}[BBP law of large numbers]\label{thm:main}
Let $X_N$ be as in \cref{def:profile}, and assume either (\ref{ass:FM}) or (\ref{ass:SW}). For each fixed integer $j\geq1$,
\begin{equation}\label{eq:positive-limit}
 \lambda_j(X_N)\longrightarrow
 \begin{cases}
  a_j+a_j^{-1},&j\leq r_+\text{ and }a_j>1,\\
  2,&\text{otherwise},
 \end{cases}
\end{equation}
and
\begin{equation}\label{eq:negative-limit}
 \lambda_{N+1-j}(X_N)\longrightarrow
 \begin{cases}
  b_j+b_j^{-1},&j\leq r_-\text{ and }b_j<-1,\\
  -2,&\text{otherwise}.
 \end{cases}
\end{equation}
All these limits hold in probability under \eqref{ass:FM}
and almost surely under \eqref{ass:SW}.
\end{theorem}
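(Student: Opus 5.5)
The plan follows the approach announced in the abstract: a sharp norm bound, an isotropic law for the resolvent of $H_N$ against the spike directions, and a finite-dimensional determinant equation. By symmetry ($X_N\mapsto -X_N$ exchanges the roles of $a_j$ and $b_j$) it suffices to prove \eqref{eq:positive-limit}.

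\textbf{Step 1: norm bound.} We first show $\norm{H_N}_{\op}\to2$, almost surely under \eqref{ass:SW} (this is essentially \cite[Remark~2.2]{ASTY}) and in probability under \eqref{ass:FM}. For \eqref{ass:FM}, we truncate entries at level $\delta/\sigstar$. The condition $N(\sigstar)^{p-2}=O(1)$ together with uniform integrability of $|W|^p$ gives that the truncation changes at most $o(1)$ in operator norm with high probability. The large entries are too few and too small for the sum of row counts to matter; bound their contribution by the maximal row $\ell^1$-norm of the exceptional part. The truncated, recentered matrix then satisfies the bounded-entry hypothesis of the sharp norm estimates (e.g.\ \cite{BvH,LVHY}), giving $\norm{\cdot}\le 2+O(\delta)+o(1)$. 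Combined with the semicircle law, Step 1 yields $\lambda_j(X_N)\ge 2-o(1)$ for fixed $j$, by interlacing with a rank-$r$ perturbation. It also shows that all eigenvalues of $X_N$ outside $[-2-\varepsilon,2+\varepsilon]$ are eigenvalues that solve
\[
\det\bigl(I_r-\Theta V_N^*(z-H_N)^{-1}V_N\bigr)=0 .
\]

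\textbf{Step 2: isotropic law.} For fixed real $z$ with $|z|>2+\varepsilon$, we show
\[
V_N^*(z-H_N)^{-1}V_N\to \msc(z)I_r,\qquad \msc(z)=\frac{z-\sqrt{z^2-4}}{2},
\]
uniformly on compacts and in the appropriate mode of convergence. We expand the resolvent as a Neumann series truncated at degree $L$; the tail is controlled by Step 1, uniformly on $\{|z|\ge 2+\varepsilon\}$, and is $O((2+\varepsilon/2)^{L}/|z|^L)$. It then suffices to show, for each fixed $k$ and unit vectors $u,v$ (columns of $V_N$), that
\[
\ip{u}{H_N^kv}-C_{k/2}\ip uv\to0,
\]
where $C_{k/2}$ is the Catalan number (zero for odd $k$). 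For the mean, we expand over paths. The row-stochastic condition makes tree-like backtracking paths contribute exactly $C_{k/2}\ip uv$, up to errors from paths with repeated edges of multiplicity $\ge3$ or cycles. These errors are bounded by powers of $\sigstar$ times higher moments, and the moments are bounded after truncation. For the variance we use a similar second-moment graph count, giving $O((\sigstar)^2\,\mathrm{poly}(k))$. This yields convergence in probability. Under \eqref{ass:SW}, we upgrade to almost sure convergence via a concentration bound (Lipschitz/Bernstein-type for the truncated entries at level $(\log N)^{1/\alpha}$) that is summable in $N$. The difficulty is that $\sigstar$ may decay only like $o((\log N)^{-1/\alpha})$, so plain variance bounds are not summable. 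I would try to use higher moments $\E|\ip u{H^kv}-\E|^{2m}$ with $m\sim\log N$, or alternatively apply the norm result to the spectrally localized problem.

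\textbf{Step 3: the determinant equation.} With $\Theta$ diagonal, the limiting equation is $\prod_i(1-\theta_i\msc(z))=0$ on $z>2$. Since $\msc(z)\in(0,1)$ there, its roots are exactly $z=a+a^{-1}$ for spikes $a>1$. The matrix function $z\mapsto I_r-\Theta V_N^*(z-H_N)^{-1}V_N$ is analytic and Hermitian on $(2+\varepsilon,\infty)$, and its eigenvalues are monotone in $z$ (the derivative of the resolvent has a sign). Uniform convergence on compacts plus a Rouché/Hurwitz argument, or monotone eigenvalue counting, then shows that the number of eigenvalues of $X_N$ in any interval $[c,d]\subset(2+\varepsilon,\infty)$ with endpoints not equal to limits converges to the number of $a_j>1$ with $a_j+a_j^{-1}\in(c,d)$. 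Also, $\norm{X_N}\le 2+\max a_j+o(1)$ excludes escape to infinity. Letting $\varepsilon\downarrow0$ and using $\lambda_j\ge2-o(1)$ for the remaining indices gives \eqref{eq:positive-limit}.

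The main obstacle is the \eqref{ass:FM} norm bound at the sharp scale $N(\sigstar)^{p-2}=O(1)$. There the exceptional large entries number $O(1)$ in expectation, and one must show they do not produce outliers; their size $\sigstar|W|$ is only $o(1)$ thanks to uniform integrability. I would handle this by showing that with high probability no row contains two large entries, so the exceptional part is a matrix with $o(1)$ operator norm.
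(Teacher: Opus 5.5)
Your overall architecture is the paper's: a norm bound, a polynomial isotropic law, and the determinant identity with Hurwitz/Rouch\'e. Truncating the Neumann series is a valid substitute for the Weierstrass step in \cref{lem:compression}, since only $|z|\geq 2+\varepsilon$ is needed.

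\textbf{The gap: the almost-sure isotropic law under \eqref{ass:SW}.} The almost-sure half of the theorem rests on this law, and you leave it at ``I would try''. The route you indicate fails as stated. Truncating $W_{ij}$ at $K'(\log N)^{1/\alpha}$ is invisible with summable probability for $K'$ large. But it leaves a core with entries up to $d_N\asymp\sigstar(\log N)^{1/\alpha}$, which is $o(1)$ while $d_N\sqrt{\log N}$ need not be (take $\alpha=2$, $\sigstar=(\log N)^{-1/2}/\log\log N$).

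Concentration tools for functions of bounded independent entries (bounded differences, the Efron--Stein moment inequality, Talagrand) only see the entry bound. They give $\norm{\ip{u}{B^kv}-\E\ip{u}{B^kv}}_{L^q}\lesssim_k d_N\sqrt q$, as in \cref{lem:bounded-entry-concentration}, or tails $e^{-c\eta^2/d_N^2}$. These are summable in $N$ only if $d_N\sqrt{\log N}\to0$. Your alternative, a direct count of central moments of order $\asymp\log N$ for non-symmetric, non-identically distributed entries, is exactly the combinatorics the paper avoids, and you do not carry it out.

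The missing idea is the splitting in \cref{prop:IGL}. First, truncate much lower, at $\Lambda_N=(\sigstar\sqrt{\log(eN)})^{-1/2}$. Then the core satisfies $d_N\sqrt{\log N}\to0$, and \cref{lem:bounded-entry-concentration} with $q\asymp\log N$ plus Markov gives $N^{-D}$ tails (\cref{prop:bounded-isotropic}). Second, the discarded heavy part is no longer zero with high probability, so you must show $\Pp(\norm{H_N-U_N}_{\op}>u)\leq N^{-D}$. This needs the order-$\log N$ trace bound \eqref{eq:ASTY-tail-part} for the symmetrized tail, obtained from the walk-shape compression in \cref{lem:ASTY}. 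Citing the norm limit of \cite{ASTY} alone does not supply this quantitative estimate.

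\textbf{A fixable error in the \eqref{ass:FM} norm bound.} Truncating $|H_{ij}|$ at a fixed level $\delta$ and invoking \cite{LVHY,BvH} does not give $2+O(\delta)$. The error term in those bounds is $C\delta\sqrt{\log N}$ (see \cref{lem:bounded-Hermitian-moment} with $q\asymp\log N$). You need a vanishing level $\tau_N$ with $\tau_N\sqrt{\log N}\to0$, e.g.\ $\tau_N=1/\log(eN)$ as in \cref{lem:tail-matching}.

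Your ``at most one large entry per row'' argument survives this change. Writing $q_i$ for the expected number of entries in row $i$ exceeding $\tau_N$, one has $\sum_iq_i^2\leq CN\tau_N^{-2p}(\sigstar)^{2p-4}\leq C(\log N)^{2p}/N$. You must then also bound the recentering of the tail part $T_N=(H_{ij}\1_{\{|H_{ij}|>\tau_N\}})$, namely $\norm{\E T_N}_{\op}\leq C\tau_N^{1-p}(\sigstar)^{p-2}=o(1)$. You also need to transfer the isotropic law from the truncated core back to $H_N$ via $\norm{H_N-B_N}_{\op}\to0$.

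\textbf{A minor point on Step 3.} The matrix $I_r-\Theta V_N^*R_N(z)V_N$ is not Hermitian for real $z$. Monotone eigenvalue counting would have to be applied to $\Theta^{-1}-V_N^*R_N(z)V_N$ on the nonzero spikes instead. The Hurwitz/Rouch\'e route you also mention avoids this issue.
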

\begin{remark}
In \cref{ex:FM-sharpness}\textup{(\ref{ex:FM-sharpness-as})}, a heavy-tailed Wigner matrix satisfies (\ref{ass:FM}) with $p=4$, while the norm limit fails almost surely. Moreover, the conditions on the sparsity are sharp: \cref{ex:SW-sharpness} shows that the limit in \eqref{eq:SW-scale} cannot in general be relaxed, while \cref{ex:FM-sharpness}\textup{(\ref{ex:FM-sharpness-scale})} shows that the bound in \eqref{eq:FM-scale} cannot in general be replaced by a diverging bound, even one growing arbitrarily slowly. In the finite-moment construction, the entry distributions depend
on $N$. For the sub-Gaussian case, a related construction is given in~\cite[Theorem~1.3]{ASTY}.
\end{remark}

\subsection{Applications}\label{sec:applications}

The model in \cref{def:profile} includes a variety of interesting examples from random matrix theory; here we list only several choices of the variance profile. The spike directions are allowed to be arbitrary and deterministic, subject to (\ref{ass:deformation}).

\begin{example}[Wigner matrices]
For $\sigma_{ij} = N^{-1/2}$, we have $\sigstar = N^{-1/2}$.
In the finite-moment case with $p=4$, condition (\ref{ass:FM}) reduces to uniform
integrability of the fourth powers of the entries, and in the sub-Weibull case \eqref{eq:SW-scale} holds automatically. Thus \cref{thm:main} gives the classical outlier
locations in the respective modes of convergence;
see~\cite{CDF,knowles2013isotropic,renfrew2013finite}.
\end{example}

\begin{example}[Random band matrices]
Index the rows and columns by $(\mathbb{Z}/L\mathbb{Z})^d$, where
$d$ is fixed and $N = L^d$. Let $f$ be a continuous, even,
compactly supported probability density on $\mathbb{R}^d$.
For a bandwidth $w_N \to \infty$ with $w_N/L \to 0$, set
\[
  \sigma_{xy}^2 = \frac{1}{M_N}
  \sum_{k \in \mathbb{Z}^d} f\!\left(\frac{x - y + kL}{w_N}\right),
  \qquad
  M_N = \sum_{z \in \mathbb{Z}^d} f(z/w_N).
\]
The normalization gives stochastic row sums, and Riemann sum
approximation gives $M_N = (1 + o(1))w_N^d$ and
$(\sigstar)^2 = O(\|f\|_\infty w_N^{-d})$. Subject to the respective entry assumptions, \cref{thm:main}
holds if the bandwidth satisfies
\[
 Nw_N^{-d(p-2)/2}=O(1)
 \quad\text{under (\ref{ass:FM})},
 \qquad
 \frac{w_N^d}{(\log N)^{2/\alpha}}\longrightarrow\infty
 \quad\text{under (\ref{ass:SW})}.
\]
For one-dimensional band matrices with uniformly bounded moments of every fixed order, see Au~\cite{Au}.
\end{example}

\begin{example}[Regular support patterns]
Let $Q_N$ be a deterministic symmetric $\{0,1\}$-matrix with exactly
$\kappa_N \geq 1$ ones in each row, and set
$\Sigma_N = \kappa_N^{-1/2} Q_N$. This includes independent weights on the edges of a deterministic regular graph, as in \cite{Au, han2024outliers,ASTY}. Since $\sigstar=\kappa_N^{-1/2}$, the profile conditions become
\[
  N\kappa_N^{-(p-2)/2} = O(1)
  \quad \text{under \eqref{ass:FM}},
  \qquad
  \frac{\kappa_N}{(\log N)^{2/\alpha}} \longrightarrow \infty
  \quad \text{under \eqref{ass:SW}}.
\]
\end{example}
\subsection{Method and structure}\label{sec:method}
Our long preprint \cite{GLZ}, treating both the law of large numbers  in the sub-Gaussian setting and the fluctuations of outlier eigenvalues in the Gaussian setting, has now  been split into two independent  papers. This paper concerns the law of large numbers, with some sections largely revised for brevity and sharpness.
Our argument retains the basic strategy of our previous work~\cite{GLZ}:  deducing
the BBP limits from sharp operator norm bounds on the undeformed
matrix and a fixed-degree polynomial isotropic law,
the latter following~\cite[Proposition~3.6]{Au}. We simplify both the proofs, and  also extend the matrix entries from the sub-Gaussian setting of \cite{GLZ} to sub-Weibull tails with exponent \(\alpha \in (0, 2]\) and to the finite-moment case.

For the spectral norm, \cite{GLZ} obtains quantitative bounds through ribbon graph expansions and comparison with the deformed GOE. Here we combine truncation with the bounded-entry moment estimate of Lata{\l}a, van Handel and Youssef~\cite[Theorem~4.8]{LVHY}. In the sub-Weibull regime, we also adapt the dimension-compression argument in \cite[Remark~2.2]{ASTY}, based on the comparison of walk shapes in \cite[Proposition~2.1]{BvH}. For the polynomial isotropic law, \cite{GLZ} proves concentration by comparing high central moments for sub-Gaussian entries with their Gaussian counterparts through ribbon graph expansions. Truncation extends this argument to sub-exponential entries, but the comparison still requires symmetric entry distributions. Here the entry-resampling moment inequality in \cite[Theorem~2]{BBLM} replaces the high-order graph count without requiring distributional symmetry.

Once these inputs are established, we locate the separated eigenvalues through a finite-dimensional resolvent equation. Both the proofs in \cite[Section~3.2]{Au} and \cite[Sections~4.1--4.3]{GLZ} proceed through spectral measures in spike directions and interlacing. Here polynomial approximation gives convergence of the resolvent compressed to the spike subspace. The classical finite-rank determinant identity and analytic zero counting then identify the outlier locations and multiplicities~\cite[Proposition~5.1 and Section~6]{BGN}. Unlike interlacing, the determinant and zero-counting steps do not require self-adjointness and apply in non-Hermitian settings once the corresponding spectral confinement and isotropic law are established.

For fixed-rank deformations, the present argument gives convergence in probability for entries with uniformly integrable $p$th powers for some $p\geq4$ under $N(\sigma_N^*)^{p-2}=O(1)$. It also extends the sub-Gaussian setting of \cite{GLZ} to uniform sub-Weibull tails with exponent $0<\alpha\leq2$, yielding almost-sure convergence under $\sigma_N^*(\log N)^{1/\alpha}\to0$. In both regimes, we remove the distributional symmetry assumption and the coordinate-support restriction on the deformation, allowing arbitrary deterministic spike directions. The method can also yield quantitative convergence rates or accommodate a rank $r$ that grows sufficiently slowly with $N$, but we do not pursue these extensions here.

The remainder of this paper is organized as follows.
Section~\ref{sec:resolvent} establishes the finite-rank reduction.
Sections~\ref{sec:edge} and~\ref{sec:isotropic} prove the operator
norm bound and the polynomial isotropic law, respectively.
The main theorem is then deduced by applying the reduction in the
appropriate mode of convergence.
Appendix~\ref{sec:finite-order} provides counterexamples
concerning convergence modes and the sharpness of the conditions on the sparsity.

\paragraph{Use of generative AI.}
The framework and idea of this paper are based on previous work \cite{GLZ}. Generative-AI tools were used for technical development and editorial preparation. The authors take responsibility for all mathematical statements and references.
\paragraph{Notation} Throughout this paper, all asymptotics are understood as \(N\to\infty\). All constants \(C,c>0\) are allowed to change from line to line, and their dependence is indicated whenever it is relevant.
\section{Resolvent identity and finite-rank reduction}\label{sec:resolvent}

For every $\musc$-integrable function $f$, write
\[
  \musc(f):=\int f(x)\,\musc(\dd x).
\]
The scope of this section is to deduce  \eqref{eq:positive-limit}--\eqref{eq:negative-limit}
from the following limits, all in the same mode of convergence:
\begin{align}
&\mu_{H_N}\Rightarrow\musc,
 \qquad \norm{H_N}_{\op}\longrightarrow2;
 \label{eq:SC}\\
 &\ip{x_N}{f(H_N)y_N}
   -\ip{x_N}{y_N}\musc(f)\longrightarrow0,
 \label{eq:IGL}
\end{align}
for every fixed polynomial $f$ and every pair of deterministic unit-vector sequences $\{x_N\},\{y_N\}$. %

Set
\begin{equation}\label{eq:resolvent}
 R_N(z):=(zI_N-H_N)^{-1},
 \qquad
 \msc(z):=\int\frac1{z-x}\,\musc(\dd x)
 =\frac{z-\sqrt{z^2-4}}2.
\end{equation}
Here $R_N$ is defined on $\C\setminus\operatorname{spec}(H_N)$, and $\msc$ is defined on $\C\setminus[-2,2]$, with $\sqrt{z^2-4}\sim z$ at infinity.
In the following convergence statements on a compact set outside $[-2,2]$, we set the supremum error to $+\infty$ if the set meets $\operatorname{spec}(H_N)$. Under \eqref{eq:SC}, this event has probability tending to zero;
if \eqref{eq:SC} holds almost surely, it occurs only finitely
often.

We begin with the following lemma, which passes from the polynomial
isotropic limit \eqref{eq:IGL} to the compressed resolvent by a
polynomial approximation argument.

\begin{lemma}\label{lem:compression}
If \eqref{eq:SC}--\eqref{eq:IGL} hold in probability, then for every fixed polynomial $f$,
\begin{equation}\label{eq:polynomial-compression}
 \norm{V_N^*f(H_N)V_N-\musc(f)I_r}_{\op}\xtoP0,
\end{equation}
and for every compact set $\mathcal K\Subset\C\setminus[-2,2]$,
\begin{equation}\label{eq:resolvent-compression}
 \sup_{z\in\mathcal K}
 \norm{V_N^*R_N(z)V_N-\msc(z)I_r}_{\op}\xtoP0.
\end{equation}
If \eqref{eq:SC}--\eqref{eq:IGL} hold almost surely,  both of the above
 hold almost surely.
\end{lemma}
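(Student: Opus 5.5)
The plan is to prove \eqref{eq:polynomial-compression} first and then pass to resolvents by polynomial approximation.

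\emph{Step 1: polynomial compression.} Let $v_1,\dots,v_r$ be the columns of $V_N$. The $(k,l)$ entry of $V_N^*f(H_N)V_N-\musc(f)I_r$ is $\ip{v_k}{f(H_N)v_l}-\ip{v_k}{v_l}\musc(f)$, because $V_N^*V_N=I_r$. Each $v_k$ is a deterministic unit-vector sequence, so \eqref{eq:IGL} applies with $x_N=v_k$, $y_N=v_l$. Since $r$ is fixed, the operator norm is at most $r$ times the maximal entry. This gives \eqref{eq:polynomial-compression} in probability, and also almost surely, since a finite intersection of almost-sure events is almost sure.

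\emph{Step 2: approximation on a compact set.} Fix a compact $\mathcal K\Subset\C\setminus[-2,2]$ and let $\delta:=\operatorname{dist}(\mathcal K,[-2,2])>0$. Set $I:=[-2-\delta/2,\,2+\delta/2]$. For $z\in\mathcal K$ the function $g_z(x)=(z-x)^{-1}$ is analytic near $I$. The family $\{g_z\}_{z\in\mathcal K}$ is uniformly bounded and equicontinuous in $z$; in fact $(z,x)\mapsto g_z(x)$ is jointly continuous on $\mathcal K\times I$.

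Given $\varepsilon>0$, cover $\mathcal K$ by finitely many points $z_1,\dots,z_m$ with the following property: for every $z\in\mathcal K$ there is some $z_i$ with
\[
\sup_{x\in I}|g_z(x)-g_{z_i}(x)|\le\varepsilon .
\]
This uses $|g_z-g_w|\le|z-w|/(\delta/2)^2$ on $I$. By Weierstrass, choose polynomials $f_i$ with $\sup_I|g_{z_i}-f_i|\le\varepsilon$. Then $\sup_I|g_z-f_i|\le2\varepsilon$ for the chosen $i$.

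\emph{Step 3: the good event.} Work on the event $E_N:=\{\norm{H_N}_{\op}\le2+\delta/2\}$. By \eqref{eq:SC} its probability tends to one, or it holds eventually almost surely. On $E_N$ we have $\operatorname{spec}(H_N)\subset I$ and $\mathcal K\cap\operatorname{spec}(H_N)=\emptyset$. The spectral theorem then gives
\[
\norm{R_N(z)-f_i(H_N)}_{\op}\le2\varepsilon,
\]
so, since $\norm{V_N}_{\op}=1$,
\[
\norm{V_N^*R_N(z)V_N-V_N^*f_i(H_N)V_N}_{\op}\le2\varepsilon .
\]
Also, since $\musc$ is supported in $[-2,2]\subset I$, we have $|\msc(z)-\musc(f_i)|\le2\varepsilon$.

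\emph{Step 4: conclusion.} By the triangle inequality, on $E_N$,
\[
\sup_{z\in\mathcal K}\norm{V_N^*R_N(z)V_N-\msc(z)I_r}_{\op}\le4\varepsilon+\max_{1\le i\le m}\norm{V_N^*f_i(H_N)V_N-\musc(f_i)I_r}_{\op}.
\]
The maximum runs over finitely many fixed polynomials, so it tends to zero in probability (respectively almost surely) by Step 1. Letting $\varepsilon\downarrow0$ gives \eqref{eq:resolvent-compression}.

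In the almost-sure case, take $\varepsilon=1/n$ along a countable sequence. This yields countably many almost-sure events, and their intersection is still almost sure.

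The main obstacle is making the approximation uniform over $z\in\mathcal K$ with only finitely many polynomials. The finite net together with the Lipschitz bound in $z$ handles this. Using only the random interval containing the spectrum would not suffice here; the deterministic interval $I$ together with the norm confinement in \eqref{eq:SC} is what makes Step 3 work.
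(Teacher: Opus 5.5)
Your proposal is correct and follows essentially the same route as the paper. Both arguments apply \eqref{eq:IGL} entrywise to get the fixed-rank compression, then use a finite $\varepsilon$-net in $\mathcal K$, Weierstrass approximation on a deterministic interval that contains the spectrum with high probability (or eventually almost surely), a Lipschitz bound in $z$, and the triangle inequality. The only cosmetic difference is that you get the Lipschitz bound from the scalar estimate $|g_z-g_w|\le|z-w|/(\delta/2)^2$ on $I$ plus the spectral theorem, whereas the paper applies the resolvent identity directly at the matrix level.
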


\begin{proof}
We prove the statements for convergence in probability; the almost sure case follows
by the same argument.
Since $r$ is fixed, \eqref{eq:polynomial-compression} follows
directly from \eqref{eq:IGL}.
Fix $\mathcal{K} \Subset \mathbb{C} \setminus [-2,2]$ and $\varepsilon>0$.
By \eqref{eq:SC}, with probability tending to one the spectrum of
$H_N$ is contained in a deterministic compact interval $J$ at
positive distance from $\mathcal{K}$. At each point $z$ of a  finite  $\varepsilon$-net $\mathcal{E}$ in $\mathcal K$, the Weierstrass approximation theorem allows us to approximate
$x \mapsto (z-x)^{-1}$ uniformly on $J$ to within $\varepsilon$
by a polynomial $p_z$. The resolvent identity gives
\[
 \norm{R_N(z)-R_N(w)}_{\op}\leq C_{\mathcal K,J}|z-w|,
\]
and the same bound holds for $\msc$. 
Consequently,
\begin{equation*}
\begin{aligned}
\bigl\|V_N^* R_N(z) V_N - \msc(z) I_r\bigr\|_{\op}&
  \leq \sup_{z \in \mathcal{E}}
  \bigl\|V_N^* R_N(z) V_N - \msc(z) I_r\bigr\|_{\op}
  + C_{\mathcal{K},J} \varepsilon\\
  &\leq 
  \sup_{z \in \mathcal{E}}\bigl\|V_N^* p_z(H_N) V_N - \musc(p_z) I_r\bigr\|_{\op}+C'_{\mathcal{K},J}\varepsilon.
\end{aligned}
\end{equation*}
Applying \eqref{eq:polynomial-compression} to the polynomials $(p_z)_{z \in \mathcal{E}}$ completes the proof.
\end{proof}

The determinant identity below is a standard finite-rank
reduction; see e.g.~\cite[Proposition~5.1]{BGN}
and~\cite[Lemma~2.1 and Remark~2.2]{MR3010398}. The stated mode of convergence follows from \cref{lem:compression}.

\begin{lemma}\label{lem:determinant}
For $z\notin\operatorname{spec}(H_N)$,
\begin{equation}\label{eq:determinant-identity}
 \det(zI_N-X_N)=\det(zI_N-H_N)
 \det\!\left(I_r-\Theta V_N^*R_N(z)V_N\right).
\end{equation}
If \eqref{eq:SC}--\eqref{eq:IGL} hold in probability (respectively, almost surely), then locally uniformly on $\C\setminus[-2,2]$ in probability (respectively, almost surely),
\begin{equation}\label{eq:F-convergence}
 F_N(z):=\det\!\left(I_r-\Theta V_N^*R_N(z)V_N\right)
 \longrightarrow
 F(z):=\det\!\left(I_r-\msc(z)\Theta\right).
\end{equation}
\end{lemma}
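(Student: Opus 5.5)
The plan has two parts: the algebraic identity \eqref{eq:determinant-identity}, and the convergence \eqref{eq:F-convergence}, which will follow from \cref{lem:compression} by continuity of the determinant.

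For the identity, fix $z\notin\operatorname{spec}(H_N)$, so that $zI_N-H_N$ is invertible. Factor
\[
 zI_N-X_N=(zI_N-H_N)-V_N\Theta V_N^*=(zI_N-H_N)\bigl(I_N-R_N(z)V_N\Theta V_N^*\bigr).
\]
Multiplicativity of the determinant reduces the claim to computing $\det(I_N-R_N(z)V_N\Theta V_N^*)$. Sylvester's identity $\det(I_N-AB)=\det(I_r-BA)$, with $A=R_N(z)V_N$ ($N\times r$) and $B=\Theta V_N^*$ ($r\times N$), turns this into $\det(I_r-\Theta V_N^*R_N(z)V_N)$. This is purely algebraic and holds for every realization, in both the real and complex cases.

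For the convergence, fix a compact set $\mathcal K\Subset\C\setminus[-2,2]$. The map $M\mapsto\det(I_r-\Theta M)$ on $r\times r$ matrices is a polynomial in the entries, hence locally Lipschitz. Concretely, there is a constant $C=C(r,\norm{\Theta}_{\op},\rho)$ such that
\[
 |\det(I_r-\Theta M)-\det(I_r-\Theta M')|\le C\norm{M-M'}_{\op}
\]
whenever $\norm{M}_{\op},\norm{M'}_{\op}\le\rho$. This follows, for example, from the multilinearity of the determinant in its columns, or from the bound $|\det P-\det Q|\le r\max(\norm P_{\op},\norm Q_{\op})^{r-1}\norm{P-Q}_{\op}$.

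Next take $M'=\msc(z)I_r$ and $\rho=1+\sup_{\mathcal K}|\msc|$; this $\rho$ is finite because $\msc$ is continuous on $\mathcal K$. By \eqref{eq:resolvent-compression}, the event
\[
 \sup_{z\in\mathcal K}\norm{V_N^*R_N(z)V_N-\msc(z)I_r}_{\op}\le 1
\]
has probability tending to one, or holds eventually almost surely in the almost sure case. On this event $\norm{V_N^*R_N(z)V_N}_{\op}\le\rho$ for all $z\in\mathcal K$, so
\[
 \sup_{z\in\mathcal K}|F_N(z)-F(z)|\le C\sup_{z\in\mathcal K}\norm{V_N^*R_N(z)V_N-\msc(z)I_r}_{\op},
\]
and the right-hand side tends to zero in the same mode. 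Since $\mathcal K$ was an arbitrary compact set, this is exactly locally uniform convergence.

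The only delicate point is the event where $\mathcal K$ meets $\operatorname{spec}(H_N)$, on which $F_N$ is undefined. By the convention stated before \cref{lem:compression}, the supremum error there is $+\infty$, and the same convention applies to $F_N$. Under \eqref{eq:SC}, this event has vanishing probability, or occurs only finitely often almost surely. It is therefore already absorbed into \eqref{eq:resolvent-compression}, and the argument above goes through unchanged.
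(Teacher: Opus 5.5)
Your proposal is correct and follows the same route as the paper: the factorization $zI_N-X_N=(zI_N-H_N)(I_N-R_N(z)V_N\Theta V_N^*)$ combined with Sylvester's identity gives \eqref{eq:determinant-identity}, and the convergence comes from \cref{lem:compression} together with continuity of the determinant. Your local Lipschitz bound is a quantitative version of the paper's ``continuity of the determinant,'' and it is exactly what makes the convergence uniform on compact sets.
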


\begin{proof} \eqref{eq:determinant-identity} follows from Sylvester's identity $\det(I-BC)=\det(I-CB)$
together with the following well-known identity: 
\[
 zI_N-X_N=(zI_N-H_N)
 \bigl(I_N-R_N(z)V_N\Theta V_N^*\bigr).
\]
The convergence immediately follows from \cref{lem:compression} and the continuity of the determinant.
\end{proof}
For $a\in\R\setminus\{0\}$, write $\rho(a):=a+a^{-1}$. The branch in \eqref{eq:resolvent} satisfies %
\[
 \msc(z)^2-z\msc(z)+1=0,\qquad
 1-a\msc(z)=0\ \Longleftrightarrow\ |a|>1,\ z=\rho(a),\qquad
 \msc'(\rho(a))=\frac1{1-a^2}\neq0.
\]
The middle equivalence is for $z\notin[-2,2]$, and the derivative identity is for $|a|>1$. Since $\Theta$ is diagonal, a repeated spike gives a zero of the same multiplicity in $F$.

We are now ready to prove the main result of this section.
The argument uses the analytic zero-counting method
of~\cite[Section~6.2]{BGN} and~\cite[Remark~2.2]{MR3010398}.

\begin{theorem}[Abstract BBP theorem]\label{thm:abstract-transfer}
If \eqref{eq:SC}--\eqref{eq:IGL} hold in probability (respectively, almost surely), then \eqref{eq:positive-limit}--\eqref{eq:negative-limit} hold in probability (respectively, almost surely). 
\end{theorem}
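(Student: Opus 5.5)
The plan is to treat the positive side \eqref{eq:positive-limit} in detail; the negative side follows by applying the same argument to $-X_N=(-H_N)+(-A_N)$. Here $-H_N$ satisfies \eqref{eq:SC}--\eqref{eq:IGL}, since $\musc$ is symmetric and $f(-x)$ is again a polynomial. I would work in probability. For the almost sure case, run the same argument on a full-measure event on which the conclusions of \cref{lem:determinant} and \eqref{eq:SC} hold along the whole sequence. Fix $\delta>0$ small, so that the distinct values $\rho(a_j)$ with $a_j>1$ are $2\delta$-separated from each other and from $[-2-\delta,2+\delta]$. All limits with $a_j>1$ can be taken inside $(2+2\delta,\infty)$ after shrinking $\delta$. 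Let $r_0:=\#\{j:a_j>1\}$.

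\emph{Step 1 (upper confinement).} By Weyl's inequality, $\norm{X_N}_{\op}\le \norm{H_N}_{\op}+\norm{\Theta}_{\op}$. So all eigenvalues of $X_N$ lie in a deterministic interval $[-M,M]$ with probability tending to one. Also, $A_N$ has at most $r_+$ positive eigenvalues, so Weyl interlacing gives $\lambda_{j}(X_N)\le\lambda_{j-r_+}(H_N)\le\norm{H_N}_{\op}$ for $j>r_+$. More generally, eigenvalues of $X_N$ outside $\operatorname{spec}(H_N)$ are exactly the zeros of $F_N$, by \eqref{eq:determinant-identity}, counted with multiplicity. This holds because $\det(zI-H_N)$ does not vanish there, and multiplicities agree as orders of zeros of the characteristic polynomial.

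\emph{Step 2 (zero counting).} Take the compact set $\mathcal K:=\{z:\operatorname{Re}z\in[2+\delta,M+1],\ |\operatorname{Im}z|\le1\}$. The limit $F(z)=\prod_{k}(1-\theta_k\msc(z))$ is analytic on a neighbourhood of $\mathcal K$. Its zeros there are exactly the points $\rho(a_j)$ with $a_j>1$, each simple per spike because $\msc'(\rho(a))\ne0$; repeated spikes give the corresponding multiplicity. Negative spikes contribute no zeros on the right, since $1-b\msc(z)=0$ forces $z=\rho(b)<-2$. Draw small circles $\gamma_k$ of radius $\eta<\delta$ around each distinct zero. On $\partial\mathcal K$ and on each $\gamma_k$, $|F|$ is bounded below by a positive constant. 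By \cref{lem:determinant}, $\sup_{\mathcal K}|F_N-F|\to0$ in probability. Hurwitz/Rouché's theorem then shows that, with probability tending to one, $F_N$ has exactly as many zeros inside each $\gamma_k$ as $F$, and no other zeros in $\mathcal K$. Since $\norm{H_N}_{\op}<2+\delta$ with high probability, these zeros are precisely the eigenvalues of $X_N$ in $[2+\delta,M]$. They are real because $X_N$ is self-adjoint.

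\emph{Step 3 (conclusion).} With probability tending to one, $X_N$ thus has exactly $r_0$ eigenvalues in $[2+\delta,\infty)$, lying in the $\eta$-neighbourhoods of the $\rho(a_j)$ with the right multiplicities. Since $\rho$ is increasing on $(1,\infty)$, the ordering gives $|\lambda_j(X_N)-\rho(a_j)|<\eta$ for $j\le r_0$, and $\lambda_j(X_N)<2+\delta$ for $j>r_0$. For the lower bound $\lambda_j(X_N)\ge 2-\delta$ with $j$ fixed, use $\mu_{X_N}\Rightarrow\musc$. This follows from $\mu_{H_N}\Rightarrow\musc$ and the rank-$r$ inequality: $\mu_{X_N}$ gives mass about $\musc([2-\delta,2])>0$ to $[2-\delta,\infty)$, so at least $j$ eigenvalues lie there. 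Letting $\delta,\eta\downarrow0$ proves \eqref{eq:positive-limit}.

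The main obstacle is Step 2. It requires handling the event where $\mathcal K$ meets $\operatorname{spec}(H_N)$ and matching multiplicities of zeros of $F_N$ with eigenvalue multiplicities. Both are resolved on the high-probability event $\norm{H_N}_{\op}<2+\delta$, where $\det(zI-H_N)$ is analytic and nonvanishing on a neighbourhood of $\mathcal K$.
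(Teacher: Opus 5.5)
Your proposal is correct and follows essentially the same route as the paper: the determinant identity \eqref{eq:determinant-identity} plus Rouch\'e/Hurwitz zero counting on compacts away from $[-2,2]$ locates the outliers with their multiplicities, the norm bound confines the spectrum, and $\mu_{X_N}\Rightarrow\musc$ via the rank inequality pins the remaining fixed-index extreme eigenvalues at $\pm2$. The differences are only cosmetic: you argue in probability directly, whereas the paper reduces to the almost sure case via the subsequence criterion, and you get the negative side by applying the argument to $-X_N$ instead of treating both sides at once.
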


\begin{proof}
It suffices to consider the almost sure case. The case of convergence in probability then follows by applying the argument to an almost surely convergent subsequence of an arbitrary subsequence and using the subsequence criterion.
By Lemma~\ref{lem:determinant}, $F_N(z)$ converges to $F(z)$
locally uniformly on $\mathbb{C} \setminus [-2,2]$, almost surely.
The zeros of $F$ are precisely
\[
\rho(a)=a+a^{-1},\qquad |a|>1,
\]
with multiplicities equal to those of the corresponding spikes.

Hence, by Rouché's theorem and the identity \eqref{eq:determinant-identity}, the eigenvalues of 
$X_N$
 that remain separated from 
$[-2,2]$
 converge, with the corresponding multiplicities, to these points. Moreover, the bound $\norm{X_N}_{\op}\leq\norm{H_N}_{\op}+\norm{\Theta}_{\op}$ ensures that the spectrum of $X_N$ is eventually contained in a fixed compact interval. On the compact set obtained by removing from this interval an
$\varepsilon$-neighborhood of $[-2,2]$ and of each outlier disc,
$F$ has no zeros and is therefore bounded away from zero. Local uniform convergence then excludes any further zeros of $F_N$, and hence any additional separated eigenvalues of $X_N$. By local uniform convergence, $F_N$
 has no zeros on this set for all sufficiently large 
$N$. Consequently, $X_N$
has no additional eigenvalues separated from $[-2,2]$.

Moreover, since $\operatorname{rank}(X_N - H_N) \leq r$,
\eqref{eq:SC} and the rank inequality~\cite[Theorem~A.43]{BS}
give $\mu_{X_N} \Rightarrow \mu_{\mathrm{sc}}$ almost surely.
After removing the outliers, this convergence forces each fixed upper and lower extreme eigenvalue to converge to $2$ and $-2$,
respectively.%
\end{proof}

\begin{remark}[Rates for separated outliers]\label{rem:outlier-rate}

Fix a spike $a$ with $|a|>1$, and choose a sufficiently small closed interval $J$ around $\rho(a)$ that is disjoint from $[-2,2]$ and contains no other limiting outlier. Whenever $J\cap\operatorname{spec}(H_N)=\varnothing$, every eigenvalue $\lambda\in J$ of $X_N$ satisfies
\[
 |\lambda-\rho(a)|
 \leq C\sup_{x\in J}
 \norm{V_N^*R_N(x)V_N-\msc(x)I_r}_{\op},
\]
where $C$ depends only on the fixed spikes and $J$. Indeed, \eqref{eq:determinant-identity} implies
\[
 \min_{\theta\in\operatorname{spec}(\Theta)}
 |1-\theta\msc(\lambda)|
 \leq\norm{\Theta}_{\op}
 \norm{V_N^*R_N(\lambda)V_N-\msc(\lambda)I_r}_{\op}.
\]
The factor for $a$ has a simple zero at $\rho(a)$, and the factors for the other distinct spike values stay away from zero on $J$. Thus the left side is bounded below by a positive constant times $|\lambda-\rho(a)|$. A quantitative bound therefore gives an outlier bound of the same order, also for repeated spikes.
\end{remark}

\section{Operator norm of the undeformed matrix}\label{sec:edge}

The goal of this section is to prove \eqref{eq:SC} for $H_N$
under \eqref{ass:FM} or \eqref{ass:SW}. 
The upper bound on $\|H_N\|_{\op}$
  is proved by truncation and the method of moments, while the matching lower bound is established via the convergence of the semicircle law.

\subsection{Upper and lower norm bounds}

We begin with the following quantitative bound on the operator norm
for matrices with bounded entries. The real symmetric case of the following bound is due to Lata{\l}a, van Handel and Youssef~\cite[Theorem~4.8]{LVHY}. We prove the complex Hermitian case by comparison with a real symmetric matrix.

\begin{lemma}[Moment bound, {\cite[Theorem~4.8]{LVHY}}]
\label{lem:bounded-Hermitian-moment}
Let $B=(b_{ij})_{i,j\leq N}$ be a real symmetric or complex
Hermitian matrix whose upper-triangular entries are independent, bounded, and centered. Set \(s_{ij}:=\E[|b_{ij}|^2].\) Then, for every integer $q\geq1$,
\begin{align}
\left(\E\big[\Tr B^{2q}\big]\right)^{1/(2q)}
&\leq
2\Big(\sum_i\big(\sum_j s_{ij}\big)^q\Big)^{1/(2q)}
\nonumber+C\sqrt q
\Big(\sum_{i,j}\norm{b_{ij}}_\infty^{2q}\Big)^{1/(2q)},
\label{eq:bounded-Hermitian-moment}
\end{align}
where $C>0$ is a universal constant.
\end{lemma}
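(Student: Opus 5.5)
Since the real symmetric case is exactly \cite[Theorem~4.8]{LVHY}, the plan is to reduce the complex Hermitian case to it by the standard real embedding. Write $B=B_1+\mathrm{i}B_2$, where $B_1=\Re B$ is real symmetric and $B_2=\Im B$ is real antisymmetric. Consider the $2N\times 2N$ real symmetric matrix
\[
 \widetilde B=\begin{pmatrix} B_1 & -B_2\\ B_2 & B_1\end{pmatrix}.
\]
It is unitarily equivalent to $B\oplus\overline B$, so every eigenvalue of $B$ appears twice. Hence $\Tr \widetilde B^{2q}=2\Tr B^{2q}$ (using $\Tr\overline B^{2q}=\overline{\Tr B^{2q}}=\Tr B^{2q}$).

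The embedded matrix, however, has dependent entries: the $(i,j)$ and $(N+i,N+j)$ entries coincide, and the off-diagonal blocks are $\pm B_2$. So \cite[Theorem~4.8]{LVHY} cannot be applied to $\widetilde B$ directly. I would instead use the structure of its proof, which is the moment method. Expand $\E\Tr B^{2q}$ as a sum over closed walks of length $2q$ on $[N]$. By independence and centering, only walks in which every edge is traversed at least twice contribute. For each such walk, the contribution is bounded in absolute value by $\prod_{e}\E|b_e|^{m_e}$, where $m_e$ is the multiplicity of edge $e$.

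This gives
\[
 \E\Tr B^{2q}\le \sum_{\text{walks}}\prod_e \E|b_e|^{m_e}=\E\Tr \widehat B^{2q},
\]
where $\widehat B$ is the real symmetric matrix with independent entries $\widehat b_{ij}=\varepsilon_{ij}|b_{ij}|$ and $\varepsilon_{ij}$ independent Rademacher signs. For $\widehat B$, odd moments of each entry vanish and even moments equal $\E|b_{ij}|^{m}$. The matrix $\widehat B$ has independent, bounded, centered upper-triangular entries with the same variances $s_{ij}$ and the same sup norms $\norm{b_{ij}}_\infty$. Applying \cite[Theorem~4.8]{LVHY} to $\widehat B$ therefore gives the stated bound for $B$, with the same constants.

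The key step is the domination $\bigl|\E\prod_e b_e^{(\cdot)}\bigr|\le\prod_e\E|b_e|^{m_e}$ for each walk. Conjugated entries $\overline{b_{ij}}=b_{ji}$ also appear in the product, but factoring by independence over distinct undirected edges and applying $|\E Z|\le \E|Z|$ to each factor handles this.

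The main point to verify is that the LVHY bound is stated for, or its proof goes through, real symmetric matrices with non-symmetric entry laws. Here this is automatic, since $\widehat B$ has symmetric entries. Thus the only remaining check is that the walk expansion in the complex case really produces nonnegative weights after taking absolute values. Because the comparison is termwise, that check is immediate.
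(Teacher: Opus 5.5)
Your overall strategy is the paper's: walk expansion, the termwise bound $\bigl|\E\prod b\bigr|\le\prod_e\E|b_e|^{m_e}$, and comparison with a real symmetric matrix with independent entries. However, the comparison matrix you chose does not work, because the identity $\sum_{\text{walks}}\prod_e\E|b_e|^{m_e}=\E\Tr\widehat B^{2q}$ is false. Since $\widehat b_{ij}=\varepsilon_{ij}|b_{ij}|$ has vanishing odd moments, $\E\Tr\widehat B^{2q}$ only sees walks in which every edge multiplicity is even. The left-hand side, and $\E\Tr B^{2q}$ itself, also contain walks with some odd multiplicity $m_e\ge3$.

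These walks exist and matter. For $q=4$ the walk $1\to1\to1\to1\to2\to2\to2\to2\to1$ contributes $\E[b_{11}^3]\,\E[b_{22}^3]\,s_{12}$ to $\E\Tr B^8$, and the diagonal entries of a Hermitian matrix are real and may be skewed. Take $N=2$, diagonal entries whose odd moments are all positive (e.g.\ a centered Bernoulli$(p)$ with $p<1/2$), and any nonzero centered bounded $b_{12}$. Then every walk contributes a nonnegative amount, the even-multiplicity walks contribute equally to both sides, and so $\E\Tr B^8>\E\Tr\widehat B^8$. You therefore only have the inequality in the wrong direction. Your closing remark misplaces the difficulty: the weights are indeed nonnegative, but symmetrization assigns weight zero to exactly the walks that need to be dominated.

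The missing idea is to compare with a real variable whose moments of every order $m\ge2$, odd orders included, are nonnegative and dominate $\E|b_{ij}|^m$. It must also keep mean zero, variance exactly $s_{ij}$ (so the leading term with constant $2$ survives), and sup norm $O(\norm{b_{ij}}_\infty)$. These requirements force an asymmetric law. The paper takes $c_{ij}$ equal to $2\norm{b_{ij}}_\infty$, $-\norm{b_{ij}}_\infty$, $0$ with probabilities $s_{ij}/(6\norm{b_{ij}}_\infty^2)$, $s_{ij}/(3\norm{b_{ij}}_\infty^2)$ and the remainder. Then $\E[c_{ij}^m]=\tfrac16 s_{ij}\norm{b_{ij}}_\infty^{m-2}\bigl(2^m+2(-1)^m\bigr)\ge s_{ij}\norm{b_{ij}}_\infty^{m-2}\ge\E|b_{ij}|^m$.

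With this choice your termwise argument does give $\E\Tr B^{2q}\le\E\Tr C^{2q}$ for the matrix $C=(c_{ij})$. Applying LVHY to $C$ yields the lemma with the constant in the second term doubled, so ``the same constants'' is not obtained. Note also that $C$ has non-symmetric entry laws. Contrary to your last paragraph, the argument therefore uses LVHY for real symmetric matrices with general centered bounded entries, which is how the paper invokes it.
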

\begin{proof}
The main idea is to bound the moments of $B$ in terms of those of a real symmetric matrix $C$ defined below. For $i\leq j$, set $c_{ij}=0$ if $\norm{b_{ij}}_\infty=0$; otherwise, let $c_{ij}$ be a real random variable with
\[
\Pp\!\left(c_{ij}=2\norm{b_{ij}}_\infty\right)=\frac{s_{ij}}{6\norm{b_{ij}}_\infty^2},\qquad \Pp\!\left(c_{ij}=-\norm{b_{ij}}_\infty\right)=\frac{s_{ij}}{3\norm{b_{ij}}_\infty^2},\qquad \Pp(c_{ij}=0)=1-\frac{s_{ij}}{2\norm{b_{ij}}_\infty^2}.
\]
Since $s_{ij}\leq\norm{b_{ij}}_\infty^2$, the distribution of $c_{ij}$ is well-defined, and
\begin{equation}\label{eq:cij}
\E[c_{ij}]=0,\qquad \E[c_{ij}^2]=s_{ij},\qquad \norm{c_{ij}}_\infty\leq2\norm{b_{ij}}_\infty.
\end{equation}
Moreover, for every integer $m\geq2$,
\[
\E[c_{ij}^m]=\frac{s_{ij}\norm{b_{ij}}_\infty^{m-2}}6\bigl(2^m+2(-1)^m\bigr)\geq s_{ij}\norm{b_{ij}}_\infty^{m-2}\geq\E[|b_{ij}|^m].
\]
Choose $\{c_{ij}:i\leq j\}$ independently, set $c_{ji}=c_{ij}$, and write $C=(c_{ij})_{i,j\le N}$. To show that the moments of $B$ are bounded by those of $C$, we expand both moments in terms of closed walks $(i_0i_1\ldots i_{2q-1}i_0)$. We  associate each walk with its connected graph of distinct indices and unordered traversed pairs, allowing loops. Let $k_e$ denote the multiplicity of the edge $e$. By independence, any walk with an edge of multiplicity one contributes zero. Hence, it suffices to consider closed walks satisfying $k_e\geq 2$ for every edge $e$. By independence and Hermitian symmetry,

\[
\Big|\E\Big[\prod_{\ell=1}^{2q}b_{i_{\ell-1}i_\ell}\Big]\Big|\leq\prod_e\E\big[|b_e|^{k_e}\big]\leq\prod_e\E\big[c_e^{k_e}\big]=\E\Big[\prod_{\ell=1}^{2q}c_{i_{\ell-1}i_\ell}\Big].
\]
Summing over all closed walks yields
$\E[\Tr B^{2q}] \leq \E[\Tr C^{2q}].$
Applying \cite[Theorem~4.8]{LVHY} to $C$, together with
\eqref{eq:cij}, completes the proof.
\end{proof}
Lemma~\ref{lem:bounded-Hermitian-moment} yields the following norm bound.

\begin{lemma}\label{lem:bounded-edge}
Let $B_N=(b_{ij})_{i,j\leq N}$ be real symmetric or complex Hermitian with independent centered upper-triangular entries. Put $s_{ij}=\E[|b_{ij}|^2]$ and suppose
\begin{equation}\label{eq:bounded-edge-assumptions}
 \max_i\sum_j s_{ij}\leq1+o(1),
 \qquad |b_{ij}|\leq d_N\quad\text{a.s.},
 \qquad d_N\sqrt{\log N}\longrightarrow0.
\end{equation}
Then
\begin{equation}\label{eq:bounded-edge-upper}
 \limsup_{N\to\infty}\norm{B_N}_{\op}\leq2
 \qquad\text{almost surely}.
\end{equation}
If, in addition, $\mu_{B_N}\Rightarrow\musc$ in probability, then $\norm{B_N}_{\op}\xtoP 2$. If the empirical spectral distributions converge almost surely, then $\norm{B_N}_{\op}\to2$ almost surely.
\end{lemma}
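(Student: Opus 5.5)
We apply \cref{lem:bounded-Hermitian-moment} to $B_N$ with a moment order $q=q_N$ that grows like $\log N$, and then use Markov's inequality and Borel--Cantelli. By \eqref{eq:bounded-edge-assumptions}, the first term on the right-hand side of the moment bound is at most
\[
 2\Big(\sum_i\big(\textstyle\sum_j s_{ij}\big)^q\Big)^{1/(2q)}\leq 2N^{1/(2q)}(1+o(1))^{1/2}.
\]
The second term is at most $C\sqrt q\,(N^2d_N^{2q})^{1/(2q)}=C\sqrt q\,N^{1/q}d_N$.

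The choice of $q$ is the one delicate step, since two requirements pull in opposite directions. The factor $N^{1/(2q)}$ must be close to $1$, which needs $q\gg\log N$. The term $\sqrt q\,d_N$ must vanish, which needs $q\ll d_N^{-2}$. Both hold if we take
\[
 q_N:=\big\lceil \log N\cdot (d_N\sqrt{\log N})^{-1}\big\rceil .
\]
Then $q_N/\log N\to\infty$, so $N^{1/q_N}\to1$. Also
\[
 \sqrt{q_N}\,d_N\approx (\log N)^{1/2}(d_N\sqrt{\log N})^{-1/2}d_N=(d_N\sqrt{\log N})^{1/2}\to0 .
\]
(If $d_N=0$ the matrix vanishes, which is trivial.) Hence $(\E\Tr B_N^{2q_N})^{1/(2q_N)}\leq 2+o(1)$.

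Fix $\varepsilon>0$. Since $\norm{B_N}_{\op}^{2q}\leq\Tr B_N^{2q}$, Markov's inequality gives
\[
 \Pp(\norm{B_N}_{\op}\geq2+\varepsilon)\leq\Big(\frac{2+o(1)}{2+\varepsilon}\Big)^{2q_N}.
\]
Because $q_N/\log N\to\infty$, this is at most $N^{-2}$ for large $N$, which is summable. Borel--Cantelli then gives $\limsup\norm{B_N}_{\op}\leq2+\varepsilon$ almost surely. Taking $\varepsilon\downarrow0$ along a countable sequence yields \eqref{eq:bounded-edge-upper}. The summability is what makes the $\log N$ growth of $q$ essential.

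For the lower bound, suppose $\mu_{B_N}\Rightarrow\musc$ in probability. For fixed $\varepsilon>0$, take a continuous bump $g\geq0$ supported in $[2-\varepsilon,2]$ with $\musc(g)>0$. Then $\mu_{B_N}(g)\to\musc(g)>0$ in probability, so with probability tending to one some eigenvalue exceeds $2-\varepsilon$, i.e.\ $\norm{B_N}_{\op}\geq\lambda_1(B_N)>2-\varepsilon$. Since the upper bound holds almost surely, it also holds in probability, and combining the two gives $\norm{B_N}_{\op}\xtoP2$. In the almost sure case, $\mu_{B_N}(g)\to\musc(g)$ almost surely for each $g$ in a countable family with $\varepsilon=1/k$. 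This gives $\liminf\norm{B_N}_{\op}\geq2$ almost surely, and together with \eqref{eq:bounded-edge-upper} the almost sure limit follows.
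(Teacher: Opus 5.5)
Your proof is correct and is essentially the paper's argument: your $q_N=\lceil\sqrt{\log N}/d_N\rceil$ coincides, up to rounding, with the paper's $q_N=\lfloor\sqrt{\log(eN)}/d_N\rfloor$, and it is followed by the same application of the bounded-entry moment bound, Markov's inequality and Borel--Cantelli. For the lower bound, your bump-function argument spells out what the paper leaves implicit; the paper cites the almost-sure semicircle lemma, but that lemma concerns $H_N$, so your self-contained deduction from the hypothesis $\mu_{B_N}\Rightarrow\musc$ is the cleaner route.
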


\begin{proof}
The case $d_N=0$ is trivial, so we assume henceforth that $d_N>0$ and set
\[
q_N=\Big\lfloor\frac{\sqrt{\log(eN)}}{d_N}\Big\rfloor.\]
Thus, \(q_N/\log N\to\infty\) and \(d_N\sqrt{q_N}\to0\). By \Cref{lem:bounded-Hermitian-moment},
\[
\left(\mathbb{E}\big[\operatorname{Tr} B_N^{2q_N}\big]\right)^{1/(2q_N)}
\le 2+o(1),
\]
where we have used \(N^{1/(2q_N)},N^{1/q_N}\to1\). Since $\|B_N\|_{\mathrm{op}}^{2q_N}\leq\operatorname{Tr} B_N^{2q_N}$, we apply Markov's inequality to obtain, for every fixed $\varepsilon>0$ and all sufficiently large $N$,
\[
 \mathbb{P}(\|B_N\|_{\mathrm{op}}>2+\varepsilon) \leq \frac{\mathbb{E}\big[\operatorname{Tr} B_N^{2q_N}\big]}{(2+\epsilon)^{2q_N}}
 \leq\left(\frac{2+\varepsilon/2}{2+\varepsilon}\right)^{2q_N}.
\]
This bound is summable in $N$, since $q_N/\log N\to\infty$. We conclude by the Borel--Cantelli lemma (together with a countable intersection over $\varepsilon\downarrow0$) that \eqref{eq:bounded-edge-upper} holds. The matching lower bound follows from \cref{lem:as-semicircle}.
\end{proof}
The next lemma follows from a semicircle criterion~\cite{Chin} and implies the desired lower bound on the operator norm.

\begin{lemma}[Almost-sure semicircle law]\label{lem:as-semicircle}
Under either (\ref{ass:FM}) or (\ref{ass:SW}),
\begin{equation}\label{eq:as-semicircle}
 \mu_{H_N}\Rightarrow\musc\qquad\text{almost surely}.
\end{equation}
\end{lemma}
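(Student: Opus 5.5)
We prove \eqref{eq:as-semicircle} in two steps. First we show that the mean spectral measure $\E\mu_{H_N}$ converges to $\musc$, using a Lindeberg-type semicircle criterion such as \cite{Chin} (or \cite{gotze2015limit}). Then we upgrade this to almost sure convergence by a concentration bound for linear statistics.

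For the first step, the criterion requires that the entries are independent and centered, that the row variances satisfy $\max_i|\sum_j\sigma_{ij}^2\E|W_{ij}|^2-1|\to0$, and a Lindeberg condition
\[
 \frac1N\sum_{i,j}\sigma_{ij}^2\,\E\bigl[|W_{ij}|^2\1_{\{\sigma_{ij}|W_{ij}|>\eta\}}\bigr]\longrightarrow0\qquad\text{for every fixed }\eta>0.
\]
The variance condition is exact up to the diagonal. The diagonal carries variance $2\sigma_{ii}^2\le 2(\sigstar)^2\to0$, and a diagonal matrix of this size has negligible effect on the Stieltjes transform, so it may be removed.

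The Lindeberg condition follows from $\sigstar\to0$ together with uniform integrability of $|W_{ij}|^2$. Indeed, $\sigma_{ij}|W_{ij}|>\eta$ forces $|W_{ij}|>\eta/\sigstar=:R_N\to\infty$. Hence the sum above is at most
\[
 \frac1N\sum_{i}\sum_j\sigma_{ij}^2\,\sup_{i,j}\E\bigl[|W_{ij}|^2\1_{\{|W_{ij}|>R_N\}}\bigr]
 =\sup_{i,j}\E\bigl[|W_{ij}|^2\1_{\{|W_{ij}|>R_N\}}\bigr]\longrightarrow0,
\]
where we used $\sum_j\sigma_{ij}^2=1$ for each $i$. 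Uniform integrability of $|W_{ij}|^2$ holds under \eqref{eq:FM-UI} since $p\ge4>2$, and under \eqref{eq:SW-norm} because sub-Weibull bounds control all moments uniformly. Under \eqref{ass:FM} we still need $\sigstar\to0$. This follows from \eqref{eq:FM-scale}, since $p\ge4$ gives $p-2\geq2>0$. Under \eqref{ass:SW} it follows directly from \eqref{eq:SW-scale}.

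For the second step, fix $z=E+i\eta$ with $\eta>0$ and set $s_N(z)=\frac1N\Tr R_N(z)$. By the standard martingale-difference (Efron--Stein/McDiarmid) argument over the rows of $W_N$, resampling one row changes $H_N$ by a matrix of rank at most $2$. This changes $\Tr R_N(z)$ by at most $4/\eta$, with no assumption on the entries. The Burkholder or Azuma inequality then gives
\[
 \Pp\bigl(|s_N(z)-\E s_N(z)|>t\bigr)\le 4\exp\bigl(-cN\eta^2t^2\bigr),
\]
which is summable in $N$. By Borel--Cantelli, $s_N(z)-\E s_N(z)\to0$ almost surely for each $z$ in a countable dense subset of $\C^+$. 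Because $|s_N'(z)|\le\eta^{-2}$, these functions are equicontinuous on compacts in $\C^+$, so the convergence extends to all of $\C^+$ simultaneously. Combined with $\E s_N\to\msc$ from the first step, this yields $\mu_{H_N}\Rightarrow\musc$ almost surely.

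The main obstacle is checking that the cited criterion applies as stated to nonidentically distributed, possibly nonsymmetric entries with only second moments, and handling the diagonal normalization $\E W_{ii}^2=2$. If the criterion needs bounded entries, we would first truncate at level $\eta/\sigstar$ and recenter. The Lindeberg bound above shows that the truncated and original matrices differ by a matrix of small normalized Hilbert--Schmidt norm in expectation. A further concentration step makes this almost sure, and the Hoffman--Wielandt inequality then transfers the semicircle law between the two matrices.
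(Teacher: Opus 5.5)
Your proposal is correct and follows the paper's route. The paper checks the same row-variance condition (in averaged form, with defect at most $(\sigstar)^2$, so removing the diagonal is not needed) and the same Lindeberg bound by $\sup_{k\le\ell}\E\bigl[|W_{k\ell}|^2\1_{\{|W_{k\ell}|>\varepsilon/\sigstar\}}\bigr]$, and applies Chin's criterion to get $\E\mu_{H_N}\Rightarrow\musc$. It then cites Chin's Lemma 1.3 for the almost-sure upgrade, which your row-martingale concentration argument for the Stieltjes transform simply writes out.
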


\begin{proof}
The averaged row-variance condition holds because
\[
 \frac1N\sum_{i=1}^N
 \Big|\sum_{j=1}^N\E\big[|H_{ij}|^2\big]-1\Big|
 \leq(\sigstar)^2=o(1);
\]
the left side is zero in the complex case. For every $\varepsilon>0$,
\[
 \frac1N\sum_{i,j}\E\!\left[|H_{ij}|^2
 \1_{\{|H_{ij}|>\varepsilon\}}\right]
 \leq C\sup_{1\leq k\leq\ell\leq N}
 \E\Big[|W_{k\ell}^{(N)}|^2
 \1_{\{|W_{k\ell}^{(N)}|>\varepsilon/\sigstar\}}\Big]=o(1).
\]
The last limit follows from \eqref{eq:FM-UI} or \eqref{eq:SW-norm}. Therefore, the hypotheses of \cite[Theorem~1.6 and Remark~1.7]{Chin} are satisfied, which implies that $\E[\mu_{H_N}]\Rightarrow\musc$; \cite[Lemma~1.3]{Chin} then gives almost-sure convergence.
\end{proof}

\subsection{Norm convergence: finite moments}
This section establishes the convergence of the spectral norm of $H_N$
under \eqref{ass:FM}. We proceed by truncation, beginning with
the following lemma, which shows that the spectral norm of the discarded tail matrix tends to zero in probability.
 
\begin{lemma}\label{lem:tail-matching}
Assume (\ref{ass:FM}) and set
\[
 \tau_N:=\frac1{\log(eN)},
 \qquad T_N:=\bigl(H_{ij}\1_{\{|H_{ij}|>\tau_N\}}\bigr)_{i,j\le N},
 \qquad M_N:=\E[T_N].
\]
Then
\begin{equation}\label{eq:tail-norm-zero}
 \norm{T_N-M_N}_{\op}\xtoP0.
\end{equation}
\end{lemma}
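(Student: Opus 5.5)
The plan is a direct sparsity argument rather than a moment computation. Entries of $T_N$ are nonzero only on rare events, and with high probability each row has at most two nonzero entries, none of them larger than a fixed $\varepsilon$. The Schur test then bounds $\norm{T_N}_{\op}$ by a constant multiple of $\varepsilon$. The mean $M_N$ is handled separately by a deterministic row-sum bound.

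\emph{Tail bounds.} Write $u(R):=\sup_{N}\sup_{k\le\ell}\E[|W_{k\ell}^{(N)}|^p\1_{\{|W_{k\ell}^{(N)}|>R\}}]$, so $u(R)\to0$ by \eqref{eq:FM-UI}, and $u$ is bounded since $u\le u(0)<\infty$. For $t>0$, since $|H_{ij}|>t$ forces $|W_{ij}|>t/\sigma_{ij}\ge t/\sigstar$, Markov's inequality gives
\[
 \Pp(|H_{ij}|>t)\le \frac{\sigma_{ij}^p}{t^p}\,u(t/\sigstar),
 \qquad
 \E\big[|H_{ij}|\1_{\{|H_{ij}|>t\}}\big]\le \frac{\sigma_{ij}^p}{t^{p-1}}\,u(t/\sigstar).
\]
Using $\sum_j\sigma_{ij}^p\le(\sigstar)^{p-2}\sum_j\sigma_{ij}^2=(\sigstar)^{p-2}$ and $(\sigstar)^{p-2}\le C/N$ from \eqref{eq:FM-scale}, we get, uniformly in $i$,
\[
 \pi_i:=\sum_j\Pp(|H_{ij}|>\tau_N)\le C\frac{(\log(eN))^{p}}{N},
 \qquad
 \sum_j\E\big[|H_{ij}|\1_{\{|H_{ij}|>\tau_N\}}\big]\le C\frac{(\log(eN))^{p-1}}{N}.
\]

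\emph{The mean.} Since $M_N$ is Hermitian, the Schur test bounds $\norm{M_N}_{\op}$ by its maximal absolute row sum. This is at most $C(\log(eN))^{p-1}/N\to0$, so $M_N$ is negligible deterministically.

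\emph{The random part.} Let $E_1$ be the event that some row of $T_N$ has at least two off-diagonal nonzero entries. These entries are independent within a row (including the diagonal, they are at most three in number). A union bound over pairs gives $\Pp(\text{row } i \text{ has } \ge 2)\le\pi_i^2$, hence
\[
 \Pp(E_1)\le N\max_i\pi_i^2\le C(\log(eN))^{2p}/N\to0.
\]
Fix $\varepsilon>0$ and let $E_2$ be the event that some $|H_{ij}|>\varepsilon$. Then
\[
 \Pp(E_2)\le\sum_{i,j}\varepsilon^{-p}\sigma_{ij}^p\,u(\varepsilon/\sigstar)\le C\varepsilon^{-p}u(\varepsilon/\sigstar)\to0,
\]
since $N(\sigstar)^{p-2}=O(1)$ and $\varepsilon/\sigstar\to\infty$. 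On $(E_1\cup E_2)^c$, every row of the Hermitian matrix $T_N$ has at most three nonzero entries, each of modulus at most $\varepsilon$. The Schur test then gives $\norm{T_N}_{\op}\le3\varepsilon$. Combining the three steps, $\Pp(\norm{T_N-M_N}_{\op}>4\varepsilon)\to0$ for every $\varepsilon>0$, which is \eqref{eq:tail-norm-zero}.

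The only delicate step is the row-count estimate for $E_1$. One must check that $N\cdot(\text{polylog}/N)^2\to0$, which is where \eqref{eq:FM-scale} and the logarithmic choice of $\tau_N$ interact. A moment approach via \cref{lem:bounded-Hermitian-moment} would instead lose a factor $\sqrt{\log N}$, which is why I avoid it here.
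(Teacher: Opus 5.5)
Your proof is correct and follows essentially the same route as the paper: a pairwise union bound (using independence within a row) showing that with high probability each row of $T_N$ has at most one off-diagonal nonzero entry, a union bound combined with uniform integrability showing $\max_{i,j}|H_{ij}|\xtoP0$, and a deterministic row-sum bound $\norm{M_N}_{\op}\le C(\log(eN))^{p-1}/N$. The only cosmetic difference is how you finish: you use the Schur test, where rows have at most two nonzero entries rather than three, so $2\varepsilon$ already suffices; the paper instead observes that $T_NT_N^*$ is diagonal, which gives $\norm{T_N}_{\op}=\max_{i,j}|(T_N)_{ij}|$.
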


\begin{proof}
For each $i$, let $q_i:=\sum_j\Pp(T_{ij}\ne0)$ be the expected number of nonzero entries in row $i$ of $T_N$. By \eqref{eq:FM-UI}, the $p$-th moments of $W_{ij}^{(M)}$ are uniformly bounded. Thus,
\begin{equation*}
 q_i =\sum_j\Pp(|H_{ij}| \ge \tau_N) \le \sum_{j}\tau_N^{-p}\mathbb{E}[|H_{ij}|^p]\leq C \tau_N^{-p}\sum_j\sigma_{ij}^p
 \leq C \tau_N^{-p}(\sigstar)^{p-2}\sum_j\sigma_{ij}^2
 =C \tau_N^{-p}(\sigstar)^{p-2}.
\end{equation*}
By independence of the entries within each row, %

\begin{equation*}
 \Pp\bigl(\exists i:\#\{j:T_{ij}\ne0\}\geq2\bigr) 
\leq \sum_{i}\sum_{j_1 \neq j_2}\Pp(T_{ij_1} \neq 0) \Pp (T_{ij_2} \neq 0) \le \sum_i q_i^2
 \leq CN\tau_N^{-2p}(\sigstar)^{2p-4}=o(1),
\end{equation*}
 where in the last equation we use \eqref{eq:FM-scale}. Thus, with probability tending to one, %
 each row of $T_N$ has at most one nonzero entry. On this event, $T_NT_N^*$ is diagonal and thus $\norm{T_N}_{\op}=\max_{i,j}|(T_N)_{ij}| \le \max_{i,j}|H_{ij}|$. 

For every fixed $\varepsilon>0$,
\begin{align*}
\Pp\Big(\max_{i,j}|H_{ij}|>\varepsilon\Big)&\leq \sum_{i,j}\varepsilon^{-p}\E\big[|H_{ij}|^p1_{\{H_{ij}| \ge \varepsilon\}}\big]\\
&\leq \sum_{i,j}\varepsilon^{-p}\sigma_{ij}^p\E\Big[|W_{ij}|^p1_{\{W_{ij}| \ge \varepsilon/\sigstar\}}\Big] \\
 &\leq \varepsilon^{-p}N(\sigstar)^{p-2}
\sup_{N\geq1}\sup_{1\leq k\leq\ell\leq N}
\E\Big[|W_{k\ell}^{(N)}|^p
 \1_{\{|W_{k\ell}^{(N)}|>\varepsilon/\sigstar\}}\Big]=o(1),
\end{align*}
where the third inequality uses the doubly stochastic property on
$(\sigma_{ij}^2)_{i,j\leq N}$, and the last follows from~\eqref{ass:FM}. Hence $\norm{T_N}_{\op}\to0$ in probability.

The  matrix $M_N$ satisfies
\begin{align*}
 \norm{M_N}_{\op}
 \leq \max_i\sum_j|M_{ij}|
 \leq C \tau_N^{1-p}\max_i\sum_j\sigma_{ij}^p 
 \leq C \tau_N^{1-p}(\sigstar)^{p-2}
 \leq \frac{C(\log(eN))^{p-1}}{N}=o(1),
\end{align*}
where the last inequality follows from~\eqref{eq:FM-scale} again. The conclusion then follows by the triangle inequality.
\end{proof}

Combining \cref{lem:tail-matching} with \cref{lem:bounded-edge,lem:as-semicircle} yields the norm limit in the finite-moment case.

\begin{proposition}\label{prop:finite-edge}
Under (\ref{ass:FM}),
\begin{equation}\label{eq:finite-edge}
 \norm{H_N}_{\op}\xtoP2.
\end{equation}
\end{proposition}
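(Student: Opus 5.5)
The plan is to decompose $H_N$ into a truncated part and a tail part, and treat each with an earlier lemma. With $\tau_N=1/\log(eN)$ as in \cref{lem:tail-matching}, write
\[
 H_N=S_N+(T_N-M_N)+\bigl(\E[S_N]\bigr)+M_N,\qquad S_N:=\bigl(H_{ij}\1_{\{|H_{ij}|\le\tau_N\}}\bigr)_{i,j\le N}.
\]
Since $H_{ij}$ is centered, $\E[S_N]=-\E[T_N]=-M_N$. Hence
\[
 H_N=B_N+(T_N-M_N),\qquad B_N:=S_N-\E[S_N].
\]
By \cref{lem:tail-matching}, $\norm{T_N-M_N}_{\op}\xtoP0$, so it suffices to show $\norm{B_N}_{\op}\xtoP2$.

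For the upper bound I will apply \cref{lem:bounded-edge} to $B_N$. The hypotheses check as follows.
\begin{itemize}
\item Its upper-triangular entries are independent and centered.
\item They are bounded by $d_N:=2\tau_N$, and $d_N\sqrt{\log N}\to0$ since $\tau_N=1/\log(eN)$.
\item The row variances satisfy $s_{ij}=\mathrm{Var}(H_{ij}\1_{\{|H_{ij}|\le\tau_N\}})\le\E|H_{ij}|^2$. This gives $\max_i\sum_js_{ij}\le\max_i\sum_j\E|H_{ij}|^2\le 1+(\sigstar)^2=1+o(1)$, where the diagonal factor $2$ in the real case only adds $\sigma_{ii}^2\le(\sigstar)^2$.
\end{itemize}
Thus $\limsup\norm{B_N}_{\op}\le2$ almost surely, and in particular $\Pp(\norm{B_N}_{\op}>2+\varepsilon)\to0$.

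For the lower bound I use \cref{lem:as-semicircle}: $\mu_{H_N}\Rightarrow\musc$ almost surely, hence in probability. Since $\norm{H_N-B_N}_{\op}\xtoP0$, the Lévy (or bounded-Lipschitz) distance between $\mu_{H_N}$ and $\mu_{B_N}$ is bounded by $\norm{H_N-B_N}_{\op}$, by Weyl's inequality applied eigenvalue-wise. Therefore $\mu_{B_N}\Rightarrow\musc$ in probability, and the second part of \cref{lem:bounded-edge} gives $\norm{B_N}_{\op}\xtoP2$.

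The triangle inequality then yields $\norm{H_N}_{\op}\xtoP2$. Alternatively, one can combine the upper bound directly with the lower bound $\liminf\norm{H_N}_{\op}\ge2$, which follows from $\mu_{H_N}\Rightarrow\musc$ because the semicircle law puts mass near $\pm2$.

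The only delicate point is ensuring the truncated matrix still satisfies the hypotheses of \cref{lem:bounded-edge}. Recentering must not spoil boundedness, which it does not since $|\E[S_{ij}]|\le\tau_N$. The variance bound $1+o(1)$ also holds without a lower estimate, because only an upper bound on row variances is required. The remaining steps are routine.
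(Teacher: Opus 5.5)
Your proof is correct and is essentially the paper's: the same recentered core $B_N=S_N-\E[S_N]=H_N-T_N+M_N$, \cref{lem:bounded-edge} together with \cref{lem:tail-matching} for the upper bound, and \cref{lem:as-semicircle} for the lower bound. The paper proves a two-sided row-variance estimate because it reuses it in \cref{prop:IGL}, but your one-sided bound is all \cref{lem:bounded-edge} needs, and your Weyl-inequality transfer of the semicircle law to $B_N$ is an optional detour. One cosmetic slip: the leading term in your first display should be $S_N-\E[S_N]$ rather than $S_N$, though the identity $H_N=B_N+(T_N-M_N)$ that you actually use is right.
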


\begin{proof}With $T_N$ and $M_N$ as in Lemma \ref{lem:tail-matching},
set
\begin{equation}\label{eq:finite-cutoff}
 B_N:=H_N-T_N+M_N,
 \qquad H_N=B_N+(T_N-M_N).
\end{equation}
The entries of $B_N$ are centered and satisfy $|B_{ij}|\leq2\tau_N$, with $\tau_N\sqrt{\log N}\to0$. Since $H_{ij}-T_{ij}$ and $T_{ij}$ have disjoint supports and $\E[H_{ij}]=0$,
\begin{equation*}
\begin{aligned}
0\leq \E[|H_{ij}|^2]-\E[|B_{ij}|^2]
&=\E[|T_{ij}|^2]+|\E[T_{ij}]|^2\\
&\leq2\sigma_{ij}^2\E\!\left[|W_{ij}|^2
 \1_{\{|W_{ij}|>\tau_N/\sigstar\}}\right]
 =:2\sigma_{ij}^2\epsilon_{ij}.
\end{aligned}
\end{equation*}
Hence,   
\begin{align}
\max_i\Big|\sum_j\E[|B_{ij}|^2]-1\Big|
& \leq (\sigstar)^2+2\max_i\sum_j\sigma_{ij}^2\epsilon_{ij} \notag\\
 &\leq (\sigstar)^2+2\max_{1\leq k\leq\ell\leq N}\epsilon_{k\ell}=o(1),
 \label{eq:finite-row-defect}
 \end{align}

where in the last step we use \eqref{eq:FM-UI} and $\tau_N/\sigstar \to \infty$. The term $(\sigstar)^2$ accounts for the real diagonal convention.
The upper bound then follows from \cref{lem:bounded-edge,lem:tail-matching},
and the lower bound from \cref{lem:as-semicircle}.
\end{proof}
\subsection{Norm convergence: sub-Weibull entries}
This section establishes the convergence of the spectral norm of
$H_N$ under \eqref{ass:SW}. Recalling \eqref{eq:psi-alpha}, the sub-Weibull tail estimate
of~\cite[Theorem~2.1]{VGNA} gives, under \eqref{ass:SW},
\begin{equation}\label{eq:SW-tail}
\Pp(|W_{ij}|>u)\leq2e^{-(u/K)^\alpha},\qquad u\geq0.
\end{equation}

The proof of the following bound relies on a truncation-and-compression argument, following \cite[Remark~2.2]{ASTY}. The compression uses the walk-shape comparison of~\cite[Proposition~2.1]{BvH}.

\begin{lemma}[Sub-Weibull trace bound]\label{lem:ASTY}
For a real symmetric or complex Hermitian matrix defined by
\[
H_N^\circ=(\sigma_{ij}W_{ij}\1_{\{i\ne j\}})_{i,j \le N},
\]
Suppose that $\Sigma_N$ satisfies \eqref{eq:stochastic}, and that the off-diagonal upper-triangular entries $\{W_{ij}:i<j\}$ are independent and centered, with
\[
\E\big[|W_{ij}|^2\big]=1,\qquad \norm{W_{ij}}_{\psi_\alpha}\leq K,\qquad 0<\alpha\leq2.
\]
Then, for every integer $k\geq\lceil\log(eN)\rceil$ and every $L\geq1$,
\begin{equation}\label{eq:ASTY-bound}
\left(\E\big[\Tr(H_N^\circ)^{2k}\big]\right)^{1/(2k)}
\leq N^{1/k}\big(2+Ce^{-cL^\alpha}+C\sigstar Lk^{1/\alpha}\big),
\end{equation}
where $C,c>0$ depend only on $K$ and $\alpha$.
\end{lemma}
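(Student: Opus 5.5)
We split each entry at level $L$: write $W_{ij}=W_{ij}\1_{\{|W_{ij}|\le L\}}+W_{ij}\1_{\{|W_{ij}|>L\}}$ and recenter both parts. This gives $H_N^\circ=B+E$. Here $B$ has centered entries bounded by $2L\sigma_{ij}$, with $\E|B_{ij}|^2\le\sigma_{ij}^2$. The matrix $E$ has centered entries $\sigma_{ij}Y_{ij}$, where by \eqref{eq:SW-tail} the $Y_{ij}$ satisfy $\E|Y_{ij}|^2\le Ce^{-cL^\alpha}$ and $\norm{Y_{ij}}_{\psi_\alpha}\le CK$. By Minkowski's inequality in the Schatten-$2k$ norm (that is, for $(\E\Tr|\cdot|^{2k})^{1/(2k)}$), it suffices to bound the two pieces separately. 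The total bound should be $N^{1/k}$ times $2+C\sigstar Lk^{1/\alpha}$ for the first piece and $Ce^{-cL^\alpha}+C\sigstar k^{1/\alpha}$ for the second; the constant $L\ge1$ absorbs the latter.

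\textbf{Bounded part.} Apply \cref{lem:bounded-Hermitian-moment} with $s_{ij}\le\sigma_{ij}^2$ and $\norm{b_{ij}}_\infty\le 2L\sigma_{ij}$. The first term is at most $2N^{1/(2k)}$. For the second, use $\sum_{ij}\sigma_{ij}^{2k}\le N(\sigstar)^{2k-2}$, so the term is at most $C\sqrt k\,L\,\sigstar N^{1/(2k)}(\sigstar)^{-1/k}$. When $k\ge\log(eN)$, the factor $(\sigstar)^{-1/k}\le N^{1/(2k)}$ because $\sigstar\ge N^{-1/2}$. Since $\sqrt k\le k^{1/\alpha}$ for $\alpha\le2$, this piece is bounded by $N^{1/k}(2+C\sigstar Lk^{1/\alpha})$.

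\textbf{Tail part.} This is the main obstacle. The variance profile of $E$ is $\sigma_{ij}^2\E|Y_{ij}|^2$, so its row sums are at most $Ce^{-cL^\alpha}$. However, the entries are unbounded with only $\psi_\alpha$ tails, so \cref{lem:bounded-Hermitian-moment} does not apply directly. Following \cite[Remark~2.2]{ASTY}, we expand $\E\Tr E^{2k}$ over closed walks. By independence, each walk contributes at most $\prod_e\sigma_e^{k_e}\E|Y_e|^{k_e}$. We then use $\E|Y_e|^{m}\le \E|Y_e|^2\,(Cm^{1/\alpha})^{m-2}$, which follows from Hölder's inequality together with the $\psi_\alpha$ moment growth $\norm{Y}_m\le Cm^{1/\alpha}$. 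To handle the high multiplicity factors we need the comparison of walk shapes in \cite[Proposition~2.1]{BvH}. It bounds the walk sum by that of a Gaussian-type model on a compressed dimension, in which each edge of multiplicity $k_e\ge2$ is charged variance times $(\text{scale})^{k_e-2}$. Here the scale is $\sigstar Ck^{1/\alpha}$, using $\sum_e(k_e-2)\le 2k$. This is the dimension-compression step, and the result is
\[
\left(\E\Tr E^{2k}\right)^{1/(2k)}\le N^{1/k}\bigl(Ce^{-cL^\alpha/2}+C\sigstar k^{1/\alpha}\bigr).
\]
Concretely, I would first try the simpler route of dominating $E$ walk-wise by a matrix of the same type as in the proof of \cref{lem:bounded-Hermitian-moment}. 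The replacement would be a symmetric variable whose even moments dominate $\E|Y|^m$ for all $m\le 2k$, truncated at level $Ck^{1/\alpha}$ with the variance preserved up to constants. After that, \cite[Theorem~4.8]{LVHY} gives exactly the displayed bound. The delicate point is constructing this dominating variable uniformly for all $m\le2k$. If that fails, I fall back on the walk-shape comparison of \cite{BvH}.

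Adding the two pieces and adjusting $c$ gives \eqref{eq:ASTY-bound}.
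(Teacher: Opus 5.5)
Your bounded part is correct and coincides with the paper's argument. The gap is in the tail part. Charging every edge uniformly by $(\sigstar Ck^{1/\alpha})^{k_e-2}$ costs a factor $\sqrt k$.

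\begin{itemize}
\item \emph{The simpler route.} The domination itself goes through: three-point variables at level $\sigma_{ij}Ck^{1/\alpha}$ with variance $\sigma_{ij}^2Ce^{-cL^\alpha}$ dominate all moments of order $\le 2k$. But \cite[Theorem~4.8]{LVHY} then gives $N^{1/k}\bigl(Ce^{-cL^\alpha/2}+C\sqrt k\,\sigstar k^{1/\alpha}\bigr)$, not the displayed bound.
\item \emph{The loss is intrinsic.} For a block profile, rows of the comparison matrix with many nonzero entries already force moments of order $\sigstar k^{1/\alpha}\sqrt{k/\log k}$. So no sharper estimate exists along this route.
\item \emph{Consequence.} With $\sigstar k^{1/\alpha+1/2}$ in place of $\sigstar k^{1/\alpha}$, the lemma as stated is not obtained. \cref{prop:SW-edge} would then need $\sigstar(\log N)^{1/\alpha+1/2}\to0$ instead of the sharp \eqref{eq:SW-scale}.
\item \emph{The Gaussian-type fallback.} It has the same defect. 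Gaussian moments do not dominate the moments of a unit-variance symmetric Weibull variable $\zeta_\alpha$ when $\alpha<2$, and compensating with the uniform scale reintroduces the loss. Also, a symmetric comparison model assigns zero weight to shapes with an odd multiplicity $k_e\ge3$, while your unsymmetrized centered tail does not.
\item \emph{A false inequality.} $\E|Y|^m\le\E|Y|^2(Cm^{1/\alpha})^{m-2}$ fails. Put the mass beyond $L$ on atoms $\pm M$ of probability $e^{-(M/K)^\alpha}$; then the ratio is $\approx M^{m-2}$, unbounded in $M$. What holds, by Cauchy--Schwarz against $\Pp(|W|>L)$, is $\E[|W|^m\1_{\{|W|>L\}}]\le C^me^{-cL^\alpha}\E|\zeta_\alpha|^m$, with $e^{-cL^\alpha}$ a uniform prefactor rather than the true variance.
\end{itemize}

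The missing idea is to keep the exact Weibull moment structure and compare with a genuine i.i.d.\ Weibull matrix. The paper proceeds in four steps.
\begin{enumerate}
\item Symmetrize: $(\E\Tr C_N^{2k})^{1/(2k)}\le2(\E\Tr(\mathcal R_N\circ T_N)^{2k})^{1/(2k)}$, so only even multiplicities survive.
\item Bound each shape $s$ with $m(s)$ vertices by
\[
C^{2k}N(\sigstar)^{2k-2(m(s)-1)}e^{-c(m(s)-1)L^\alpha}\prod_e\E|\zeta_\alpha|^{k_e}.
\]
This uses that at least $m(s)-1$ distinct edges each carry the factor $e^{-cL^\alpha}$.
\item Compare with the same shape for a symmetric i.i.d.\ Weibull matrix $G_n$ of compressed dimension $n=\lceil(\sigstar)^{-2}e^{-cL^\alpha}\rceil+k$. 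Using $n!/(n-m(s))!\ge n[(\sigstar)^{-2}e^{-cL^\alpha}]^{m(s)-1}$, this gives
\[
\E\Tr(\mathcal R_N\circ T_N)^{2k}\le C^{2k}(\sigstar)^{2k}(N/n)\,\E\Tr G_n^{2k}.
\]
\item Invoke the i.i.d.\ Weibull bound $(\E\Tr G_n^{2k})^{1/(2k)}\le Cn^{1/k}(\sqrt n+k^{1/\alpha})$ from \cite[Remark~2.2]{ASTY}. Together with $\sigstar\sqrt n\le C(e^{-cL^\alpha/2}+\sigstar\sqrt k)$, this yields exactly the tail bound you wanted.
\end{enumerate}
Step 4 is the decisive quantitative input, and your proposal never identifies it. The walk-shape comparison of \cite{BvH} only transfers walk sums between models. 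It does not by itself produce the $\sqrt n+k^{1/\alpha}$ scaling.
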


\begin{proof}
For $i<j$, set
\[
U_{ij}:=\sigma_{ij}\Big(W_{ij}\1_{\{|W_{ij}|\leq L\}}-\E[W_{ij}\1_{\{|W_{ij}|\leq L\}}]\Big),\qquad
T_{ij}:=\sigma_{ij}W_{ij}\1_{\{|W_{ij}|>L\}},
\]
and complete $U_N,T_N$ by Hermitian symmetry with zero diagonal entries. Since
\[
\max_i\sum_j\E\big[|U_{ij}|^2\big]\leq\max_i\sum_{j\neq i}\sigma_{ij}^2\le1\quad  \text{and} \quad \norm{U_{ij}}_\infty\leq2\sigstar L,
\]
it follows from \cref{lem:bounded-Hermitian-moment} that
\begin{equation}\label{eq:ASTY-bounded-part}
\left(\E\big[\Tr U_N^{2k}\big]\right)^{1/(2k)}
\leq N^{1/k}\left(2+C\sigstar L\sqrt{k}\right).
\end{equation}
Set $C_N:=T_N-\E[T_N]$, so that $H_N^\circ=U_N+C_N$. Let $T_N'$ be an independent copy of $T_N$, and let $\mathcal R_N$ be an independent real symmetric Rademacher matrix with zero diagonal. By Jensen's inequality, symmetrization~\cite[Section~6.1]{LT}, and the Schatten triangle inequality,
\begin{align}
\left(\E\big[\Tr C_N^{2k}\big]\right)^{1/(2k)}
&\leq\left(\E\big[\Tr(T_N-T_N')^{2k}\big]\right)^{1/(2k)}\leq2\left(\E\big[\Tr(\mathcal R_N\circ T_N)^{2k}\big]\right)^{1/(2k)}.
\label{eq:ASTY-tail-symmetrization}
\end{align}
Indeed, the upper-triangular entries of $T_N-T_N'$ are symmetric, so multiplication by independent Rademacher signs preserves their joint law. 

 We now bound the moments of $\mathcal{R}_N\circ T_N$. Let $\zeta_\alpha$ be a symmetric real-valued Weibull random variable normalized so that $\E[|\zeta_\alpha|^2]=1$. Then $\norm{\zeta_\alpha}_{L^m}\asymp m^{1/\alpha}$, and the sub-Weibull tail and moment bounds (see, for instance, \cite[Theorem~2.1]{VGNA}) imply, uniformly in $i,j$ and $m\geq2$,
\begin{equation}\label{equ:moment_1_SW}
\E\left[|W_{ij}|^m\1_{\{|W_{ij}|>L\}}\right]
\leq\left(\E|W_{ij}|^{2m}\right)^{1/2}\Pp(|W_{ij}|>L)^{1/2}
\leq C^m e^{-cL^\alpha}\E[|\zeta_\alpha|^m].
\end{equation}
In the Hermitian case, note that
\[
\left|\E\left[W_{ij}^a\overline{W_{ij}}^{\,b}\1_{\{|W_{ij}|>L\}}\right]\right|
\leq\E\left[|W_{ij}|^{a+b}\1_{\{|W_{ij}|>L\}}\right].
\]
Thus, the moments of $W_{ij}$ can likewise be bounded in terms of $\xi_{\alpha}$.

Expand the $2k$ moments into closed walks $\boldsymbol u=(u_1u_2\dots u_{2k}u_1)$. Since the entries of $\mathcal{R}_N \circ T_N$ symmetrically
distributed, only closed walks in which each distinct edge is
traversed an even number of times contribute to the expansion. Group the surviving closed walks according to their shapes, as in~\cite[Proposition~2.1 and Lemmas~2.5--2.6]{BvH}. %
Fix a shape $s$ with $m(s)$ vertices and edge multiplicities $(k_e)_e$.  Since $s$ is connected, it has at least $m(s)-1$
distinct edges, and
\[
\sum_{\boldsymbol u:\,s(\boldsymbol u)=s}
\prod_{a=1}^{2k}\sigma_{u_a u_{a+1}}
\leq N(\sigstar)^{2k-2(m(s)-1)}.
\]
Indeed, retain two profile factors on each edge of a spanning tree and bound all remaining factors by $\sigstar$. Summing the tree factors by successive leaf removal and using $\sum_j\sigma_{ij}^2=1$ then yields the above bound. Combining with~\eqref{equ:moment_1_SW}, the contribution of
shape $s$ to the moment of $\mathcal{R}_N \circ T_N$ is at most
\[
C^{2k}N(\sigstar)^{2k-2(m(s)-1)}
e^{-c(m(s)-1)L^\alpha}
\prod_e\E\big[|\zeta_\alpha|^{k_e}\big].
\]
On the other hand, set
\[
  n := \Bigl\lceil (\sigstar)^{-2} e^{-cL^\alpha} \Bigr\rceil + k,
\]
and let $G_n$ be the zero-diagonal real symmetric matrix with i.i.d.\
off-diagonal entries distributed as $\zeta_\alpha$.  
Applying the moment expansion  again to $G_n$, we obtain that the contribution of the same shape $s$ with $m(s)$ vertices and edge multiplicities $(k_e)_{e}$ is
 
\[\frac{n!}{(n-m(s))!}\prod_e\E\big[|\zeta_\alpha|^{k_e}\big].\]
Since $m(s)\leq k+1$,
\[
\frac{n!}{(n-m(s))!}
\geq n\left[(\sigstar)^{-2}e^{-cL^\alpha}\right]^{m(s)-1}.
\]
Thus, for each shape $s$, its contribution to the moments of
$\mathcal{R}_N \circ T_N$ can be bounded by its contribution to
those of $G_n$. Summing over all shapes yields
\[
\E\big[\Tr(\mathcal R_N\circ T_N)^{2k}\big]
\leq C^{2k}(\sigstar)^{2k}\frac{N}{n}\E\big[\Tr G_n^{2k}\big].
\]
We now apply the moment bound from \cite[Remark~2.2]{ASTY} with $L=1$ to $(n-1)^{-1/2}G_n$ to obtain
\[
\left(\E[\Tr G_n^{2k}]\right)^{1/(2k)}
\leq C_\alpha n^{1/k}\left(\sqrt n+k^{1/\alpha}\right).
\]
Since \eqref{eq:stochastic} implies $\sigstar\geq N^{-1/2}$, we have $n\leq N+k+1$. Hence, for $k\geq\log(eN)$, 
\[
n^{1/(2k)}\leq C,\qquad
\sigstar\sqrt n\leq C\left(e^{-cL^\alpha/2}+\sigstar\sqrt k\right).
\]
Taking the $2k$-th root and using $\sqrt{k} \leq k^{1/\alpha}$ gives
\begin{equation}\label{eq:ASTY-tail-part}
\left(\E[\Tr(\mathcal R_N\circ T_N)^{2k}]\right)^{1/(2k)}
\leq CN^{1/(2k)}
\left(e^{-cL^\alpha}+\sigstar k^{1/\alpha}\right).
\end{equation}
Since $\alpha \leq 2$, $L \geq 1$, and $N^{1/(2k)} \leq N^{1/k}$,
the bound~\eqref{eq:ASTY-bound} follows by combining
\eqref{eq:ASTY-bounded-part}, \eqref{eq:ASTY-tail-symmetrization},
and~\eqref{eq:ASTY-tail-part} with the triangle inequality.
\end{proof}

The following consequence extends the Weibull norm limit established in \cite[Remark~2.2]{ASTY} to random matrices whose entries satisfy assumption 
(\ref{ass:SW}).

\begin{proposition}\label{prop:SW-edge}
Under (\ref{ass:SW}),
\begin{equation}\label{eq:SW-edge}
\norm{H_N}_{\op}\longrightarrow2\qquad\text{almost surely}.
\end{equation}
\end{proposition}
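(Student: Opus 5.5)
The plan is to split $H_N=H_N^\circ+D_N$, where $D_N=\operatorname{diag}(\sigma_{ii}W_{ii})$, and to handle the two pieces separately. The lower bound $\liminf\norm{H_N}_{\op}\geq2$ almost surely is immediate from \cref{lem:as-semicircle}: almost-sure weak convergence to $\musc$, whose support is $[-2,2]$, forces $\lambda_1(H_N)\geq2-\varepsilon$ and $\lambda_N(H_N)\leq-2+\varepsilon$ eventually. So everything reduces to showing $\limsup\norm{H_N}_{\op}\leq2$ almost surely.

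\emph{Diagonal part.} We have $\norm{D_N}_{\op}\leq\sigstar\max_i|W_{ii}|$. By \eqref{eq:SW-tail} and a union bound,
\[
\Pp\bigl(\max_i|W_{ii}|>u\bigr)\leq2Ne^{-(u/K)^\alpha}.
\]
Take $u=K(3\log(eN))^{1/\alpha}$. The right side is then $O(N^{-2})$, which is summable, so by Borel--Cantelli, eventually $\norm{D_N}_{\op}\leq C\sigstar(\log N)^{1/\alpha}\to0$ by \eqref{eq:SW-scale}.

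\emph{Off-diagonal part.} Apply \cref{lem:ASTY} to $H_N^\circ$. The hypotheses hold because \eqref{ass:noise} gives unit off-diagonal variance and \eqref{eq:SW-norm} gives the $\psi_\alpha$ bound. Choose $k=k_N:=\lceil M\log(eN)\rceil$ for a large fixed $M$, so that $N^{1/k}\leq e^{1/M}$. Choose $L=L_N\to\infty$ slowly enough that $\sigstar L_N(\log N)^{1/\alpha}\to0$; this is possible by \eqref{eq:SW-scale}, for instance $L_N=(\sigstar(\log N)^{1/\alpha})^{-1/2}\vee1$. Then $\sigstar L_Nk_N^{1/\alpha}\leq CM^{1/\alpha}\sigstar L_N(\log N)^{1/\alpha}\to0$ and $e^{-cL_N^\alpha}\to0$. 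Hence \eqref{eq:ASTY-bound} gives
\[
\bigl(\E\Tr(H_N^\circ)^{2k_N}\bigr)^{1/(2k_N)}\leq e^{1/M}(2+o(1)).
\]
Markov's inequality then gives
\[
\Pp\bigl(\norm{H_N^\circ}_{\op}>e^{1/M}(2+\varepsilon)\bigr)\leq\Bigl(\frac{2+\varepsilon/2}{2+\varepsilon}\Bigr)^{2k_N}\leq N^{-2M c_\varepsilon}
\]
for large $N$, with $c_\varepsilon>0$. Choosing $M$ large relative to $\varepsilon$ makes this summable. Borel--Cantelli, followed by taking $M\to\infty$ and $\varepsilon\downarrow0$ along countable sequences, yields $\limsup\norm{H_N^\circ}_{\op}\leq2$ almost surely. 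The triangle inequality then completes the proof.

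The main obstacle is the bookkeeping in the off-diagonal part. The lemma carries the prefactor $N^{1/k}$ rather than $1+o(1)$, and the error term involves $k^{1/\alpha}$, so $k$ cannot be taken too large relative to $\log N$ without violating the scale condition. Fixing $k\asymp M\log N$ with $M$ large but fixed resolves both: the prefactor is $e^{1/M}$, the Markov bound is polynomially small with an exponent that grows with $M$, and the $k^{1/\alpha}$ term costs only a constant $M^{1/\alpha}$, which is absorbed by letting $L_N$ grow slowly.
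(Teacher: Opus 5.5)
Your proof is correct and follows the paper's route. You split off the diagonal and remove it by Borel--Cantelli, apply \cref{lem:ASTY} to $H_N^\circ$ with $L_N\to\infty$ slowly, conclude with Markov and Borel--Cantelli, and take the lower bound from \cref{lem:as-semicircle}. The only difference is bookkeeping: the paper takes $k_N=\lfloor c_N^{-\alpha/2}\log N\rfloor$ with $c_N=\sigstar(\log N)^{1/\alpha}$, so $k_N/\log N\to\infty$, and calibrates $L_N$ via $e^{-cL_N^\alpha}=\sigstar k_N^{1/\alpha}$; the moment bound is then already $2+o(1)$ and a single countable intersection over $\eta\downarrow0$ suffices. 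Your fixed multiple $k_N=\lceil M\log(eN)\rceil$ leaves the factor $e^{1/M}$ and needs the extra countable intersection over $M\to\infty$, which you handle correctly.
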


\begin{proof}

Set $D_N:=\operatorname{diag}(H_{11},\ldots,H_{NN}).$
For every fixed $\varepsilon>0$, \eqref{eq:SW-tail} yields
\begin{equation}\label{eq:diag-SW}
\Pp(\norm{D_N}_{\op}>\varepsilon)
\leq CNe^{-c\,(\varepsilon/\sigstar)^{\alpha}}.
\end{equation}
The upper bound is summable in $N$ by \eqref{eq:SW-scale}. Hence, by the Borel--Cantelli lemma, $\norm{D_N}_{\op}\to0$ a.s..

Put $c_N:=\sigstar(\log N)^{1/\alpha}\to0$. With $c$ as in \eqref{eq:ASTY-bound}, choose
\[
k_N:=\left\lfloor (c_N)^{-\alpha/2} \, {\log N} \right\rfloor,\qquad
L_N^\alpha:=c^{-1}\log\frac1{\sigstar k_N^{1/\alpha}}.
\]
Then, for all sufficiently large $N$, we have $k_N\geq\lceil\log(eN)\rceil$, $L_N\geq1$, and
\[
\frac{k_N}{\log N}\to\infty,\qquad
N^{1/k_N}\to1,\qquad
\sigstar k_N^{1/\alpha}=(1+o(1))\sqrt{c_N}.
\]
Moreover,
\[
e^{-cL_N^\alpha}=\sigstar k_N^{1/\alpha},\qquad
\sigstar L_Nk_N^{1/\alpha}
=O\left(\sqrt{c_N}\log^{1/\alpha}\frac1{c_N}\right)=o(1).
\]
Applying \cref{lem:ASTY} to $H_N^\circ=H_N-D_N$ therefore gives
\begin{equation}\label{eq:ASTY-2plus}
\left(\E\big[\Tr(H_N-D_N)^{2k_N}\big]\right)^{1/(2k_N)}
\leq2+o(1).
\end{equation}
Hence, for every $\eta>0$,
\[
\Pp\bigl(\norm{H_N-D_N}_{\op}>2+\eta\bigr)
\leq\left(\frac{2+o(1)}{2+\eta}\right)^{2k_N}.
\]
This bound is summable since $k_N/\log N\to\infty$. The Borel--Cantelli lemma, together with a countable intersection over $\eta \downarrow 0$, yields 
\[
\limsup_{N\to\infty}\norm{H_N-D_N}_{\op}\leq2
\qquad\text{almost surely}.
\]
Since $\norm{D_N}_{\op}\to0$ almost surely, we conclude that \[
\limsup_{N\to\infty}\norm{H_N}_{\op}\leq2
\qquad\text{almost surely}.
\]%
The matching lower bound follows from  \cref{lem:as-semicircle}.
\end{proof}

\section{Polynomial isotropic laws}\label{sec:isotropic}

In this section, we prove the polynomial isotropic law stated in \eqref{eq:IGL}. We first treat the case
of bounded entries, then pass to the two entry regimes by truncation.

\subsection{Expectation and concentration for bounded cores}

Write $m_t^{\rm sc}:=\musc(x^t)$ for $t\in \mathbb{N}$. The next lemma adapts the argument of~\cite[Lemma~3.1]{Au} to bounded entries with approximately normalized variance sums.

\begin{lemma}\label{lem:bounded-core-bias}
Let $B=(b_{ij})_{i,j\leq N}$ be a real symmetric or complex Hermitian random matrix whose upper-triangular entries are independent and centered. Set
\[
s_{ij}:=\E[|b_{ij}|^2],\qquad
 \varepsilon:=\max_i\Big|\sum_j s_{ij}-1\Big|.
\]
Assume $\varepsilon\leq1$ and $|b_{ij}|\leq d\leq1$ almost surely. For every fixed integer $t\geq1$,
\begin{equation}\label{eq:bounded-bias}
 \norm{\E[B^t]-m_t^{\rm sc}I_N}_{\op}
 \leq C_t(\varepsilon+d).
\end{equation}
\end{lemma}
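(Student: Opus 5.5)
We expand the entries of $\E[B^t]$ into walks and compare with the semicircle moments. For fixed $x,y$,
\[
 (\E[B^t])_{xy}=\sum_{i_1,\ldots,i_{t-1}}\E\big[b_{x i_1}b_{i_1i_2}\cdots b_{i_{t-1}y}\big],
\]
a sum over walks $x=i_0,i_1,\ldots,i_t=y$. By independence and centering, only walks in which every unordered edge (loops allowed) is traversed at least twice contribute. Since the walk is connected and has $t$ steps, it has at most $\lfloor t/2\rfloor$ distinct edges, so the number of distinct vertices is at most $\lfloor t/2\rfloor+1$. We classify walks by their shape, meaning the pattern of first visits, as in the walk-shape grouping used in \cref{lem:ASTY}. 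There are at most $C_t$ shapes, so it suffices to bound each shape class separately, uniformly in $x,y$.

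\textbf{Main terms.} Consider closed walks ($x=y$, $t=2q$) whose shape is a tree with $q$ edges, each traversed exactly twice, and no loops. By Wick-type counting there are exactly $m_t^{\rm sc}$ (a Catalan number) such shapes. For each one, the contribution is $\sum\prod_{e}s_e$ over the distinct vertex labels. We sum over the leaves successively. If the distinctness constraints are dropped, each leaf sum $\sum_j s_{ij}$ equals $1+O(\varepsilon)$, so the unconstrained sum is $(1+O(\varepsilon))^q=1+O_t(\varepsilon)$. Restoring distinctness removes terms in which a vertex coincides with another; each removed term carries an extra factor $s_{ij}\le d^2$ for some edge, which costs $O_t(d^2)$. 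In the complex case, $\E[b_{ij}\overline{b_{ij}}]=s_{ij}$ still holds, while $\E[b_{ij}^2]$ never appears on a tree traversed back and forth. Hence the diagonal entries of $\E[B^t]$ equal $m_t^{\rm sc}+O_t(\varepsilon+d^2)$.

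\textbf{Error terms.} Every other shape, including all shapes with $x\neq y$, has either an edge of multiplicity $\ge3$ or a cycle or loop. For such a shape we bound the moment $\E|b_e|^{k_e}\le d^{k_e-2}s_e$ and estimate $\sum\prod_e$ by keeping $s_e$ on a spanning tree and bounding the remaining edge factors by $d^2$ (or by $d^{k_e-2}$ for multiplicities). Summing leaves over the spanning tree gives a bound $(1+\varepsilon)^{C}\,d^{\,\ge1}$ per entry. This does not yet control the operator norm, since $\norm{\cdot}_{\op}$ of a matrix is not bounded by its entries.

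\textbf{Passing to the operator norm.} To handle this, bound $\norm{E}_{\op}\le\max_x\sum_y|E_{xy}|$ using Schur's test; $E$ is Hermitian, so the row-sum bound suffices. For a fixed shape and fixed $x$, we also sum over $y$. Treat $y$ as a free vertex, or as an already visited vertex if the shape forces $y$ to equal an earlier vertex. The walk from $x$ to $y$ in which every edge is repeated has a spanning tree rooted at $x$, and summing over all of its non-root vertices (including $y$) with the factors $s_e$ gives $(1+\varepsilon)^{C_t}$. The leftover factors supply at least one power of $d$. When $x\ne y$, the path back from $y$ to $x$ must reuse edges, which creates either a cycle (an extra edge factor $\le d^2$, or $\le d$ via $|b|\le d$) or a multiplicity $\ge3$ (a factor $d$). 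The main obstacle I expect is making this bookkeeping uniform. Concretely, one must check that every non-main shape, after the row sum over $y$, retains a net factor $d$ or $\varepsilon$; in particular, off-diagonal tree shapes cannot occur, since a tree walk with each edge doubled returns to its start. This is where I would follow \cite[Lemma~3.1]{Au} closely, replacing exact normalization by the $1+O(\varepsilon)$ leaf sums. Collecting the diagonal main terms and the $O_t(d+\varepsilon)$ row sums, and using $d\le1$ so that $d^2\le d$, gives \eqref{eq:bounded-bias}.
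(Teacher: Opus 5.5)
Your proposal is correct and follows essentially the paper's route. Both use the walk expansion grouped by shapes, evaluate the Catalan-many doubled-tree shapes by leaf removal with an $O_t(d^2)$ injectivity correction, and bound every other shape by keeping $s_e$ on a spanning tree; this leaves a factor $d^{\,t-2(|V|-1)}$ with exponent at least $1$, which settles the bookkeeping you flagged. The only difference is how you pass to the operator norm: you apply Schur's row-sum test to the Hermitian matrix $\E[B^t]-m_t^{\rm sc}I_N$, so the endpoint $y$ is summed as a free tree vertex, whereas the paper tests against unit vectors $x,y$ and controls the endpoint path by $\langle |x|,S^l|y|\rangle\le\norm{S}_{\op}^l\le(1+\varepsilon)^l$; either works.
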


\begin{proof}
\emph{Step 1: Walk expansion.} Fix deterministic unit vectors $x, y \in \mathbb{C}^{N}$.
Expanding $\langle x, \mathbb{E}[B^{t}]y \rangle$ yields
\[
  \langle x, \mathbb{E}[B^{t}]y \rangle
  = \sum_{i_{0},\ldots,i_{t}}
    \overline{x_{i_{0}}}\,y_{i_{t}}\,
    \mathbb{E}\!\left[b_{i_{0}i_{1}}\cdots b_{i_{t-1}i_{t}}\right].
\]
We group the terms according to their \emph{shape}, following the
framework of~\cite[Proposition~2.1 and Lemmas~2.5--2.6]{BvH}.
Associate with each walk its connected graph $G=(V,E)$,
whose vertices are the distinct indices and whose edges are the
distinct unordered pairs traversed; loops are permitted. Relabel the vertices $V$ in order of
first appearance, and let $ \alpha \in \{1,\ldots,|V|\}$
be the label of the end vertex $i_t$. Write $k_{e}$ for the multiplicity of edge $e$. By independence, a walk contributes to the expectation
only if $k_{e} \geq 2$ for each $e$.

\emph{Step 2: Leading shapes.} 
A shape is \emph{leading} if $G$ is a tree and every edge is
traversed exactly twice.
Such walks are necessarily closed ($i_0 = i_t$, i.e.\ $\alpha=1$).
The leading shapes of length \(t=2k\) are in bijection with
Dyck paths of length \(2k\), and their number is the Catalan number $m_{2k}^{\rm sc}$. There are no leading shapes when $t$ is odd.
 
Fix a leading shape $\tau$ with $t/2$ edges.
Each edge traversed twice contributes a moment factor
$\mathbb{E}[|b_{ij}|^{2}] = s_{ij}$.
We sum over all labelings of $\tau$ by successive leaf removal:
at each step, sum the leaf index reducing the sum to a telescoping product of row
sums $\sum_{j} s_{ij}$.
Hence the label sum over all (not necessarily injective) labelings
of $\tau$ equals
\[\sum_{i_1, \cdots  ,i_{|V|}}\overline{x_{i_1}}\,y_{i_1}\prod_{(z,w) \in E}s_{i_zi_{w}}= \langle x, y \rangle+ O_{t}(\varepsilon).
\]
Restricting to injective labelings changes this by $O_{t}(d^{2})$:
each identification of two distinct vertices introduces at least one
extra variance factor bounded by $\max_{i,j} s_{ij} \leq d^{2}$. Since $d \le 1$, the
total contribution of the leading shapes is
\[
m_{t}^{\mathrm{sc}}\cdot \big(\langle x, y \rangle + O_{t}(\varepsilon + d)\big).
\]

\textit{Step 3: Non-leading shapes.} Every non-leading shape satisfies at least one of the
following: (i)~some edge has multiplicity $k_e \geq 3$, or
(ii)~$G$ contains a cycle.
Fix a spanning tree $T$ of $G$. In each case, $t-2|E(T)| \ge 1$. Using the mixed-moment bound $s_{ij}d^{k_e-2}$, retain one variance factor $s_{i_zi_w}$ on each edge of $T$
and bound every remaining variance factor by $d^{2}$. The absolute contribution of a fixed shape is bounded by
\[d^{\,t-2|E(T)|}\cdot\sum_{i_1\neq \cdots \neq i_{|V|}}|x_{i_1}|\,|y_{i_\alpha}|\prod_{(z,w) \in E(T)}s_{i_zi_{w}} \le d\cdot\sum_{i_1\neq \cdots \neq i_{|V|}}|x_{i_1}|\,|y_{i_\alpha}|\prod_{(z,w) \in E(T)}s_{i_zi_{w}} . \]

For the absolute label sum over the spanning tree $T$,
the endpoint factor is $|x_{i_1}|\,|y_{i_\alpha}|$.
When $\alpha = 1$, successive leaf removal reduces the
tree sum to a product of row sums
$\max_i \sum_j s_{ij} \leq 1+\varepsilon \leq 2$,
leaving only the endpoint factor $\sum_{i_1}|x_{i_1}y_{i_1}| \leq 1$.
When $\alpha \neq 1$, first remove the branches off the path joining the endpoints, using the row-sum bound. The remaining path sum is bounded by
$\langle |x|, S^l |y|\rangle \leq \|S\|_{\mathrm{op}}^l \leq (1+\varepsilon)^l$,
where $S=(s_{ij})$, and $l$ is the path length between vertices $1$ and $\alpha$ in $T$.
Thus, in both cases the label sum is at most $(1+\varepsilon)^{|V|-1} \leq 2^{|V|-1} \le 2^{t-1}$.
Since $t$ is fixed, there are finitely many shapes, thus the contribution
of all non-leading shapes is bounded by $C_t d$.

Combining the two contributions, we conclude that
\[
  \Bigl|\langle x,\,(\mathbb{E}[B^{t}] - m_{t}^{\mathrm{sc}} I_{N})\,y
  \rangle\Bigr|
  \le C_{t}(\varepsilon + d),
\]
and thus completes the proof.
\end{proof}

The next concentration lemma follows from the Efron--Stein moment inequality of~\cite[Theorem~2]{BBLM}.

\begin{lemma}[Concentration by entry resampling]\label{lem:bounded-entry-concentration}
Let $B=(b_{ij})_{i,j\leq N}$  be a real symmetric or complex Hermitian random matrix whose upper-triangular entries are independent and centered. Suppose
\[
\max_i\sum_j\E\big[|b_{ij}|^2\big]\leq2,
 \qquad |b_{ij}|\leq d\leq1\quad\text{almost surely}.
\]
Then for any deterministic unit vectors $x,y$, every integer $t\geq1$, and every real $q\geq2$,
\begin{equation}\label{eq:bounded-entry-concentration}
 \norm{\bigp{x}{B^t y}-\E\Big[\bigp{x}{B^t y}\Big]}_{L^q}
 \leq C_t d\sqrt q\,
 \norm{(1+\norm{B}_{\op})^{t-1}}_{L^q},
\end{equation}
where $C_t$ is a constant depending only on $t$.
\end{lemma}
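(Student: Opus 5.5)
The plan is to apply the Efron--Stein-type moment inequality of \cite[Theorem~2]{BBLM} to the function
\[
Z:=\bigp{x}{B^t y},
\]
viewed as a function of the independent upper-triangular entries $\{b_{ij}:i\le j\}$. Taking real and imaginary parts separately if needed, that inequality states
\[
\norm{Z-\E Z}_{L^q}\le C\sqrt q\,\norm{\sqrt{V^+}}_{L^q},
\qquad
V^+=\E'\Big[\sum_{i\le j}|Z-Z^{(ij)}|^2\Big],
\]
where $Z^{(ij)}$ is obtained by replacing $b_{ij}$ (and $b_{ji}$) with an independent copy $b_{ij}'$, and $\E'$ integrates only over the copies. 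So everything reduces to a pathwise bound on $V^+$ of the form $C_t d^2(1+\norm{B}_{\op})^{2(t-1)}$, up to a harmless modification of norms.

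\emph{Step 1: Telescoping the resampling difference.} Write $B^{(ij)}=B+\Delta_{ij}$, where
\[
\Delta_{ij}=(b_{ij}'-b_{ij})E_{ij}+\overline{(b_{ij}'-b_{ij})}E_{ji}
\]
for $i<j$, and a single diagonal term when $i=j$. The telescoping identity gives
\[
B^t-(B^{(ij)})^t=-\sum_{s=0}^{t-1}B^s\Delta_{ij}(B^{(ij)})^{t-1-s}.
\]
Hence $Z-Z^{(ij)}$ is a sum of at most $2t$ terms of the form
\[
(b_{ij}'-b_{ij})\,\overline{(B^s x)_i}\,\big((B^{(ij)})^{t-1-s}y\big)_j,
\]
together with the same term with $i$ and $j$ exchanged.

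\emph{Step 2: Summing over the entries.} Since $|b_{ij}'-b_{ij}|\le 2d$ and
\[
\sum_{i,j}|u_i|^2|w_j|^2=\norm u^2\norm w^2,
\]
one would like $\sum_{i\le j}|Z-Z^{(ij)}|^2\le C_t d^2\max_s\norm{B^s}^2\norm{(B^{(ij)})^{t-1-s}}^2$. The obstacle is that the vector $(B^{(ij)})^{t-1-s}y$ depends on $(i,j)$, so the Cauchy--Schwarz factorization fails as stated. I would fix this by expanding $(B^{(ij)})^{t-1-s}$ once more around $B$, or equivalently by telescoping symmetrically. The result is a main term built from $B^{t-1-s}y$ alone, which factorizes and yields $\norm{B}_{\op}^{2(t-1)}$, plus correction terms. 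Each correction carries extra factors $(b'-b)$ together with entries of powers of $B$ or $B^{(ij)}$ at the index pair $(i,j)$. These are bounded crudely by $d\le1$ times operator norms, using $\norm{B^{(ij)}}_{\op}\le\norm B_{\op}+4d\le\norm{B}_{\op}+4$. Each correction keeps at least one pair of vector coordinates $|u_i|^2$ or $|w_j|^2$ indexed by a fixed vector, while the remaining factors are bounded uniformly. To sum over both indices, I would instead use $\E'|b_{ij}'|^2=s_{ij}$ and the row bound $\max_i\sum_j s_{ij}\le2$, together with $|b_{ij}|^2\le d^2$ only on the side where summation over $j$ would otherwise be uncontrolled. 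This is precisely where the variance hypothesis enters.

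\emph{Step 3: Combining.} The bound from Step 2 gives
\[
V^+\le C_t d^2(1+\norm{B}_{\op})^{2(t-1)}
\]
after taking $\E'$. Inserting it into \cite[Theorem~2]{BBLM} yields \eqref{eq:bounded-entry-concentration}. The main obstacle is the bookkeeping in Step 2, namely controlling the $(i,j)$-dependence of the perturbed powers without losing a factor of $N$. If the re-expansion becomes unwieldy, the fallback is the variant of \cite{BBLM} that uses $V^-$ (the $\E'$-free version), together with an explicit treatment of $\norm{B^{(ij)}}$ via the deterministic bound above.
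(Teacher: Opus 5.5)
Your overall plan is the paper's: apply \cite[Theorem~2]{BBLM} to the real and imaginary parts of $\bigp{x}{B^ty}$, and bound the resampling proxy pathwise by $C_td^2(1+\norm{B}_{\op})^{2t-2}$ after re-expanding around $B$, so that at least one outer vector does not depend on the resampled entry. The gap is in the step you single out yourself: summing the terms that contain at least two factors of $\Delta_{ij}$. After the crude bound, such a term is controlled only by $|u_i|^2+|u_j|^2$ for a fixed vector $u$. The piece $|u_i|^2$ has no $j$-dependence, so the sum over $j$ must be carried entirely by the resampling weight
\[
w_{ij}:=\E'|b_{ij}'-b_{ij}|^2=s_{ij}+|b_{ij}|^2 .
\]
The row bound handles $s_{ij}$. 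But bounding the random part $|b_{ij}|^2$ by $d^2$ on the uncontrolled side produces $\sum_j d^2=Nd^2$. Concretely, take $t=2$ and $x=y=e_1$. The word $\Delta_{1j}^2$ gives $\bigp{e_1}{\Delta_{1j}^2e_1}=|b_{1j}'-b_{1j}|^2$, and your recipe only yields $\sum_j\E'|b_{1j}'-b_{1j}|^4\lesssim d^2+Nd^4$. This diverges in the intended application $d\asymp1/\log N$.

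The missing ingredient is the pathwise identity $\sum_j|b_{ij}|^2=(B^2)_{ii}\le\norm{B}_{\op}^2$. It gives $\max_i\sum_j w_{ij}\le2+\norm{B}_{\op}^2$, which in the paper's notation reads $\sum_e w_eP_e\le(2+\norm{B}_{\op}^2)I_N$. A term with $a\ge2$ factors of $\Delta$ then contributes at most $C_t(2d)^{2a-2}(2+\norm{B}_{\op}^2)(1+\norm{B}_{\op})^{2(t-a)}\le C_td^2(1+\norm{B}_{\op})^{2t-2}$. This holds because $d\le1$ and such a term carries at most $t-2$ powers of $B$ (or of $B^{(ij)}$, using $\norm{B^{(ij)}}_{\op}\le\norm{B}_{\op}+2$), which leaves room for the two extra powers coming from the random row sum. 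This is precisely why $\norm{B}_{\op}$ appears on the right of \eqref{eq:bounded-entry-concentration} beyond the norms of $B^kx$ and $B^ky$. With this fix your one-step re-expansion works; the single-$\Delta$ terms only need $w_{ij}\le2d^2$. The paper instead expands $(B+\Delta_e)^t-B^t$ fully into words in $B$ and $\Delta_e$, so that both outer vectors are independent of $e$.
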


\begin{proof}
   For $e = (i,j)$ with $1 \leq i \leq j \leq N$, let $B^{(e)}$ be
the matrix obtained from $B$ by replacing $b_{ij}$ with an
independent copy $b'_{ij}$, together with its conjugate entry.
Set $\Delta_e := B^{(e)} - B$, and let $P_e$ be the orthogonal
projection onto the coordinates indexed by $\{i,j\}$ (onto
coordinate $i$ alone if $i = j$). Then
\[
  \Delta_e = P_e \Delta_e P_e,
  \qquad
  \|\Delta_e\|_{\op} = |b'_{ij} - b_{ij}| \leq 2d.
\]
Denote by $\mathbb{E}'$ the conditional expectation with respect
to the replacement entry $b'_{ij}$, and set
\[
w_e:=\mathbb E'|b'_{ij}-b_{ij}|^2
=|b_{ij}|^2+\mathbb E|b_{ij}|^2.
\]
A direct computation gives
\begin{equation}\label{eq:resampling-weights}
  w_e \leq 2d^2,
  \qquad
  \sum_e w_e P_e \leq (2 + \|B\|_{\op}^2) I_N.
\end{equation}
Indeed, $\sum_e w_e P_e$ is a diagonal matrix with $i$-th diagonal
entry $\sum_j |b_{ij}|^2 + \sum_j \mathbb{E}|b_{ij}|^2$.
The second bound follows from $\sum_j |b_{ij}|^2 = (B^2)_{ii}
\leq \|B\|_{\op}^2$, together with the variance row-sum assumption.

We now bound the difference   $(B^{(e)})^t - B^t$. Expanding it, we obtain
\[
  (B^{(e)})^t - B^t
  =  (B + \Delta_e)^t - B^t
  = \sum_{a=1}^{t}
    \sum_{\substack{k_0,\ldots,k_a \geq 0 \\ k_0+\cdots+k_a = t-a}}
    B^{k_0} \Delta_e B^{k_1} \cdots \Delta_e B^{k_a}.
\]
For $a = 1$, a direct computation using the definition of $\Delta_e$
and the bound $w_e \leq 2d^2$ gives, for any vectors $u, v$,
\[
  \sum_e \mathbb{E}'\bigl|\langle u, \Delta_e v \rangle\bigr|^2
  \leq C d^2 \|u\|_2^2 \|v\|_2^2.
\]
Taking $u = B^{k_0} x$ and $v = B^{k_1} y$, the right hand side
is bounded by $C d^2 \|B\|_{\op}^{2t-2}$. For $a \geq 2$, set $u = B^{k_0} x$ and $v = B^{k_a} y$.
Since $\Delta_e = P_e \Delta_e P_e$ and $\|\Delta_e\|_{\op} \leq 2d$,
\begin{equation*} \begin{aligned}\bigl|\langle u,\, \Delta_e B^{k_1} \cdots B^{k_{a-1}} \Delta_e v
  \rangle\bigr|
 & \leq \|\Delta_e\|_{\op}^{a} \|B\|_{\op}^{\sum_{j=1}^{a-1} k_j}
  \|P_e u\|_2 \|P_e v\|_2\\
 & \leq (2d)^{a-1} |b'_{ij} - b_{ij}|
  \|B\|_{\op}^{\sum_{l=1}^{a-1} k_l}
  \|P_e u\|_2 \|P_e v\|_2.
\end{aligned}
\end{equation*}

Therefore,
\begin{align*}
  &\sum_e \mathbb{E}'\!\left[
    \bigl|\langle x, B^{k_0} \Delta_e B^{k_1} \cdots
    \Delta_e B^{k_a} y \rangle\bigr|^2
  \right] \\
  &\quad \leq (2d)^{2a-2}
    \|B\|_{\op}^{2\sum_{l=1}^{a-1} k_l}
    \sum_e w_e \|P_e u\|_2^2 \|P_e v\|_2^2 \\
  &\quad \leq (2d)^{2a-2} \bigl(2 + \|B\|_{\op}^2\bigr)
    \|B\|_{\op}^{2(t-a)} \\
  &\quad \leq C_t\, d^2 \bigl(1 + \|B\|_{\op}\bigr)^{2t-2},
\end{align*}
where the second inequality uses \eqref{eq:resampling-weights}
and $\|P_e v\|_2 \leq \|v\|_2$. Cauchy--Schwarz over the finitely many words yields
\[
 \E'\left[\sum_e
 \left|\ip{x}{B^t y}-\ip{x}{(B^{(e)})^t y}\right|^2\right]
 \leq C_t d^2(1+\norm{B}_{\op})^{2t-2}.
\]
This bounds the one-sided resampling proxies for both the real and
imaginary parts of $\langle x, B^t y \rangle$.
Applying \cite[Theorem~2]{BBLM} to each part and combining them via
the $L^q$ triangle inequality completes the proof.
\end{proof}

Combining the two lemmas with \cref{lem:bounded-Hermitian-moment} gives the following estimate.

\begin{proposition}\label{prop:bounded-isotropic}
Let $B_N$ be a real symmetric or complex Hermitian random matrix whose upper-triangular entries are independent and centered.  Suppose
\[
 \varepsilon_N:=\max_i\Big|\sum_j\E\big[|(B_N)_{ij}|^2\big]-1\Big|\to0,
 \qquad |(B_N)_{ij}|\leq d_N\quad\text{almost surely},
 \qquad d_N\sqrt{\log(eN)}\to0.
\]
Then, for every fixed integer $t \geq 0$ and all $\eta, D > 0$,
and for all sufficiently large $N$,
\begin{equation}\label{eq:bounded-core-isotropic-tail}
 \sup_{\norm{x}_2=\norm{y}_2=1}
 \Pp\left(\big|\bigp{x}{B_N^t y}
       -m_t^{\rm sc}\ip{x}{y}\big|>\eta\right)
 \leq N^{-D}.
\end{equation}
Here the supremum is over deterministic unit vectors. Consequently, the polynomial isotropic law \eqref{eq:IGL} holds  for $B_N$ almost surely.
\end{proposition}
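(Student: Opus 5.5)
The plan is to split the deviation into a bias term and a fluctuation term:
\[
\big|\bigp{x}{B_N^t y}-m_t^{\rm sc}\ip{x}{y}\big|
\leq \big|\bigp{x}{(\E[B_N^t]-m_t^{\rm sc}I_N)y}\big|
+\big|\bigp{x}{B_N^t y}-\E\bigp{x}{B_N^t y}\big|.
\]
The case $t=0$ is trivial, so we take $t\geq1$. By \cref{lem:bounded-core-bias}, applied with $\varepsilon=\varepsilon_N$ and $d=d_N$ (both at most $1$ for large $N$), the first term is at most $C_t(\varepsilon_N+d_N)$ uniformly over unit vectors. This is below $\eta/2$ for all large $N$. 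It therefore suffices to bound $\Pp(|\bigp{x}{B_N^t y}-\E\bigp{x}{B_N^t y}|>\eta/2)$ by $N^{-D}$, uniformly in $x,y$.

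For the fluctuation term, apply \cref{lem:bounded-entry-concentration}. Its hypothesis $\max_i\sum_j\E|b_{ij}|^2\leq2$ holds once $\varepsilon_N\leq1$. With $q\geq2$ to be chosen, Markov's inequality gives
\[
\Pp\Big(\big|\bigp{x}{B_N^t y}-\E\bigp{x}{B_N^t y}\big|>\eta/2\Big)
\leq\Big(\frac{2C_t d_N\sqrt q}{\eta}\Big)^q
\norm{(1+\norm{B_N}_{\op})^{t-1}}_{L^q}^q .
\]
The main obstacle is controlling $\norm{(1+\norm{B_N}_{\op})^{t-1}}_{L^q}$ at the large values of $q$ needed for polynomial decay. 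For this I would use \cref{lem:bounded-Hermitian-moment} with an even moment of order $2m\geq q(t-1)$:
\[
\E\norm{B_N}_{\op}^{2m}\leq\E\Tr B_N^{2m}\leq\Big(2N^{1/(2m)}(1+\varepsilon_N)^{1/2}+C\sqrt m\,N^{1/m}d_N\Big)^{2m}.
\]
Choose $q=q_N:=\lceil\log(eN)\rceil$ and $m=m_N:=\lceil q_N(t-1)/2\rceil+1$. Then $N^{1/m}\leq C_t$ and $\sqrt m\,d_N\leq C_t d_N\sqrt{\log(eN)}\to0$, so $\norm{\norm{B_N}_{\op}}_{L^{2m}}\leq C_t'$. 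Since $L^q(t-1)$-norms are dominated by $L^{2m}$-norms, this gives $\norm{(1+\norm{B_N}_{\op})^{t-1}}_{L^q}\leq (1+C_t')^{t-1}$, a constant depending only on $t$.

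With these choices the tail bound becomes $\big(C_t''\,d_N\sqrt{\log(eN)}/\eta\big)^{q_N}$. Because $d_N\sqrt{\log(eN)}\to0$, the base is eventually below $e^{-D}$, so the probability is at most $e^{-Dq_N}\leq N^{-D}$ for all large $N$. This bound is uniform in deterministic unit $x,y$, which proves \eqref{eq:bounded-core-isotropic-tail}.

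For the almost-sure isotropic law \eqref{eq:IGL}, fix a polynomial $f$, unit sequences $\{x_N\},\{y_N\}$, and $\eta>0$. By linearity in the monomials it suffices to treat each $t\leq\deg f$. Taking $D=2$ in \eqref{eq:bounded-core-isotropic-tail} makes the probabilities summable in $N$. Borel--Cantelli then gives $\limsup_N|\bigp{x_N}{B_N^ty_N}-m_t^{\rm sc}\ip{x_N}{y_N}|\leq\eta$ almost surely. A countable intersection over $\eta\downarrow0$ completes the argument.
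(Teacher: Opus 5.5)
Your proof is correct and follows the paper's argument essentially step for step: the bias is handled by \cref{lem:bounded-core-bias}, the fluctuation by \cref{lem:bounded-entry-concentration} at $q_N\asymp\log(eN)$ with the factor $(1+\norm{B_N}_{\op})^{t-1}$ controlled through \cref{lem:bounded-Hermitian-moment}, and the conclusion follows by Markov's inequality and Borel--Cantelli. The only blemish is a harmless slip: for $t=1$ your choice gives $m_N=1$, so the claim $N^{1/m_N}\leq C_t$ fails. In that case, however, $(1+\norm{B_N}_{\op})^{t-1}\equiv1$ and no moment control is needed.
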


\begin{proof}
The case $t = 0$ is trivial, so we assume henceforth that $t \geq 1$. For every fixed $A > 0$, applying \cref{lem:bounded-Hermitian-moment}
with $k_N = \lceil A \log(eN) \rceil$ yields
\[
 \left(\E\big[\Tr B_N^{2k_N}\big]\right)^{1/(2k_N)}
 \leq2N^{1/(2k_N)}\sqrt{1+\varepsilon_N}
      +C\sqrt{k_N}\,N^{1/k_N}d_N=O_A(1).
\]
Set $q_N=\max\{2,\lceil\log(eN)\rceil\}$. 
Triangle inequality of the $L^{q_N}$ norm and the moment bound above yield
\[\norm{(1+\norm{B_N}_{\op})^{t-1}}_{L^{q_N}} \le C_t\sum_{j=0}^{t-1}\left(\E\|B_{N}\|^{jq_{N}}_{\op} \right)^{1/{q_{N}}}\le C_t\sum_{j=0}^{t-1}\left(\E\big[\Tr B_N^{2jq_{N}}\big]\right)^{1/(2q_N)}=O_{t}(1). \]
Then \cref{lem:bounded-entry-concentration} gives
\[ \sup_{\norm{x}_2=\norm{y}_2=1}
 \norm{\bigp{x}{B_N^t y}-\E\Big[\bigp{x}{B_N^t y}\Big]}_{L^{q_N}}\le C_td_N\sqrt{q_N} =o(1).
\]
For fixed $\eta, D$, it follows from \cref{lem:bounded-core-bias}
that for all sufficiently large $N$,
\[\Big|\E\Big[\bigp{x}{B_N^t y}\Big]-m_t^{\rm sc}\ip{x}{y}\Big|
 \leq C_t(\varepsilon_N+d_N) \le \frac{\eta}{2}.
\]
The bound~\eqref{eq:bounded-core-isotropic-tail} then follows from
Markov's inequality, and the last assertion follows from the
Borel--Cantelli lemma.
\end{proof}

\subsection{Application to the two entry regimes}

The following proposition is the counterpart of the isotropic global law in~\cite[Proposition~3.6]{Au} for the variance profiles and entry regimes considered here.

\begin{proposition}[Polynomial isotropic law]\label{prop:IGL}
Under (\ref{ass:FM}), for every fixed polynomial $f$ and every pair of deterministic unit-vector sequences $\{x_N\},\{y_N\}$,
\begin{equation}\label{eq:IGL-two-regimes}
 \ip{x_N}{f(H_N)y_N}-\ip{x_N}{y_N}\musc(f)\xtoP0.
\end{equation}
Under \eqref{ass:SW}, this convergence holds almost surely.
\end{proposition}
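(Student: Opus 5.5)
By linearity it suffices to treat monomials $f(x)=x^t$ for a fixed integer $t\geq1$. The plan is to reduce to \cref{prop:bounded-isotropic} by truncation, using the same decompositions that were used for the norm in \cref{sec:edge}. In each regime we write
\[
H_N=B_N+E_N,
\]
where $B_N$ is a bounded core and the remainder satisfies $\norm{E_N}_{\op}\to0$ in the appropriate mode. Since $B_N$ satisfies the hypotheses of \cref{prop:bounded-isotropic}, we get $\ip{x_N}{B_N^ty_N}-m_t^{\rm sc}\ip{x_N}{y_N}\to0$ almost surely. The telescoping identity
\[
H_N^t-B_N^t=\sum_{a=0}^{t-1}H_N^{a}E_NB_N^{t-1-a}
\]
then gives
\[
\big|\ip{x_N}{(H_N^t-B_N^t)y_N}\big|\leq t\,\norm{E_N}_{\op}\big(\norm{H_N}_{\op}+\norm{B_N}_{\op}\big)^{t-1}.
\]
By \cref{prop:finite-edge}, \cref{prop:SW-edge} and \cref{lem:bounded-edge}, both norms are eventually bounded by $3$, in probability and almost surely respectively. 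So the right side tends to $0$ in the corresponding mode.

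\emph{Finite moments.} Take $B_N=H_N-T_N+M_N$ as in \eqref{eq:finite-cutoff}, so that $E_N=T_N-M_N$. We already know that $|B_{ij}|\leq2\tau_N$ with $\tau_N\sqrt{\log N}\to0$, and that the row-variance defect tends to $0$ by \eqref{eq:finite-row-defect}. \cref{lem:tail-matching} gives $\norm{E_N}_{\op}\xtoP0$. Combining this with almost-sure convergence for $B_N$ and the norm bounds in probability yields \eqref{eq:IGL-two-regimes} in probability.

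\emph{Sub-Weibull.} First remove the diagonal: $\norm{D_N}_{\op}\to0$ almost surely, as shown in the proof of \cref{prop:SW-edge}. For the off-diagonal part, truncate at level $\tau_N:=\delta_N^{-1}$ in the scale of $H$, i.e.\ truncate $W_{ij}$ at $L_N=\tau_N/\sigstar$. Here $\delta_N\to\infty$ is chosen slowly enough that $\tau_N\sqrt{\log N}\to0$, and at the same time $L_N\geq(\log N)^{1/\alpha}\cdot\omega_N$ for some $\omega_N\to\infty$. This is possible because $\sigstar(\log N)^{1/\alpha}\to0$ and $\alpha\leq2$: we need $\sigstar(\log N)^{1/\alpha}\omega_N\leq\tau_N\ll(\log N)^{-1/2}$, and since $(\log N)^{1/\alpha}\geq(\log N)^{1/2}$ we may take $\tau_N=(\sigstar(\log N)^{1/\alpha})^{1/2}(\log N)^{-1/2}$. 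Because $L_N\gg(\log N)^{1/\alpha}$, the tail bound \eqref{eq:SW-tail} gives
\[
\Pp\big(\exists\, i,j:\ |W_{ij}|>L_N\big)\leq 2N^2e^{-(L_N/K)^\alpha},
\]
which is summable. Hence almost surely the tail matrix $T_N$ vanishes for all large $N$.

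The centering matrix $M_N=\E[T_N]$ satisfies
\[
\norm{M_N}_{\op}\leq\max_i\sum_j\sigma_{ij}\,\E\big[|W_{ij}|\1_{\{|W_{ij}|>L_N\}}\big]\leq\sqrt N\,Ce^{-cL_N^\alpha}\to0,
\]
using Cauchy--Schwarz together with $\sum_j\sigma_{ij}\leq\sqrt N$. The variance defect is $O\big((\sigstar)^2+e^{-cL_N^\alpha}\big)\to0$ by \eqref{equ:moment_1_SW}. Thus $E_N=D_N+T_N-M_N\to0$ almost surely, and every step of the transfer argument holds almost surely.

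The main obstacle is the choice of truncation level in the sub-Weibull regime. It must simultaneously make the core entries $o((\log N)^{-1/2})$, as \cref{prop:bounded-isotropic} requires, and make the tail vanish eventually almost surely. The computation above shows that both are compatible exactly because $\alpha\leq2$ and $\sigstar(\log N)^{1/\alpha}\to0$.
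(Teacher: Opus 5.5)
Your finite-moment argument is correct and is the paper's own. The sub-Weibull part, however, has a genuine gap: the two requirements you impose on the truncation level cannot be met together under (\ref{ass:SW}). \cref{prop:bounded-isotropic} needs $\sigstar L_N\sqrt{\log N}\to0$. For the tail to vanish eventually you need $L_N\geq\omega_N(\log N)^{1/\alpha}$. Together these force $\sigstar(\log N)^{1/\alpha+1/2}\to0$, which is strictly stronger than \eqref{eq:SW-scale}; the inequality $(\log N)^{1/\alpha}\geq(\log N)^{1/2}$ points the wrong way for what you need. With your explicit choice, write $c_N:=\sigstar(\log N)^{1/\alpha}$. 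Then $L_N=c_N^{-1/2}(\log N)^{1/\alpha-1/2}$, so $L_N/(\log N)^{1/\alpha}=(c_N\log N)^{-1/2}$. This tends to $0$ whenever $c_N\gg1/\log N$, and then your bound $2N^2e^{-(L_N/K)^\alpha}$ tends to infinity rather than being summable. This is not just bookkeeping. Take $\alpha=2$, Gaussian entries and the block profile \eqref{eq:block-profile} with $d_N\approx\log N(\log\log N)^2$. Then \eqref{eq:SW-scale} holds, but $\max_{i,j}|H_{ij}|\approx\sqrt2/\log\log N\gg(\log N)^{-1/2}$. So no bounded core with entries $o((\log N)^{-1/2})$ can contain all entries, and the tail matrix is nonzero with probability tending to one.

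The missing idea is that the tail cannot in general be made to disappear; it has to be controlled in operator norm. The paper truncates at the lower level $\Lambda_N=(\sigstar\sqrt{\log(eN)})^{-1/2}$. The core entries are then $O(\sigstar\Lambda_N)=o((\log N)^{-1/2})$, so \cref{prop:bounded-isotropic} applies. The genuinely nonzero centered off-diagonal remainder $C_N$ is bounded through the trace estimates \eqref{eq:ASTY-tail-symmetrization}--\eqref{eq:ASTY-tail-part} from the proof of \cref{lem:ASTY}. There, symmetrization followed by the walk-shape comparison with a Weibull Wigner matrix $G_n$ gives $(\E\Tr C_N^{2k_N})^{1/(2k_N)}\leq CN^{1/(2k_N)}(e^{-c\Lambda_N^\alpha}+\sigstar k_N^{1/\alpha})=o(1)$ for $k_N=\lceil\log(eN)\rceil$. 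This is exactly where \eqref{eq:SW-scale} is used sharply. Markov's inequality then gives $\Pp(\norm{C_N}_{\op}>u)\leq N^{-D}$, and Borel--Cantelli yields $\norm{H_N-U_N}_{\op}\to0$ almost surely. After that, your telescoping argument goes through unchanged. Your more elementary route, in which the tail vanishes eventually, is valid only in the smaller regime $\sigstar(\log N)^{1/\alpha+1/2}\to0$.
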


\begin{proof}
\emph{Finite moments.} The centered core $B_N$ in \eqref{eq:finite-cutoff} satisfies $|(B_N)_{ij}|\leq2/\log(eN)$ and $\varepsilon_N\to0$ by \eqref{eq:finite-row-defect}. Hence \cref{prop:bounded-isotropic} applies to $B_N$. By \cref{lem:tail-matching}, $\norm{H_N-B_N}_{\op}\xtoP0$. Hence, by Lemma \ref{prop:finite-edge}, both $\|H_N\|_{\op}$ and $\|B_N\|_{\op}$ are bounded in probability. So, for each fixed $t\geq1$,
\[
 \norm{H_N^t-B_N^t}_{\op}
 \leq\norm{H_N-B_N}_{\op}
       \sum_{j=0}^{t-1}\norm{H_N}_{\op}^{j}
                       \norm{B_N}_{\op}^{t-1-j}
 \xtoP0.
\]
Linearity then yields \eqref{eq:IGL-two-regimes}.

\emph{Sub-Weibull entries.} Set
\[
 \Lambda_N:=\max\Big\{1,
       \bigl(\sigstar\sqrt{\log(eN)}\bigr)^{-1/2}\Big\}.
\]
Since $\alpha\leq2$, (\ref{ass:SW}) gives $\sigstar\sqrt{\log(eN)}\to0$. Hence $\Lambda_N\to\infty$ and $\sigstar\Lambda_N\sqrt{\log(eN)}\to0$. Define the truncated Hermitian core $U_N$ by
\[
 (U_N)_{ij}:=\sigma_{ij}\left(
 W_{ij}\1_{\{|W_{ij}|\leq\Lambda_N\}}
 -\E[W_{ij}\1_{\{|W_{ij}|\leq\Lambda_N\}}]\right),\qquad i\leq j,
\]
with the lower-triangular entries determined by Hermitian symmetry. Its upper-triangular entries remain independent and satisfy $|(U_N)_{ij}|\leq2\sigstar\Lambda_N$. Integration of \eqref{eq:SW-tail} and the variance-profile identity give
\[
 \max_i\Big|\sum_j\E\big[|(U_N)_{ij}|^2\big]-1\Big|
 \leq C\bigl((\sigstar)^2+e^{-c\Lambda_N^\alpha}\bigr)\to0.
\]
Thus, \cref{prop:bounded-isotropic} applies to $U_N$. Let $C_N$ be the off-diagonal part of $H_N-U_N$ and put $k_N=\lceil\log(eN)\rceil$. The centered tail estimates \eqref{eq:ASTY-tail-symmetrization}--\eqref{eq:ASTY-tail-part}, with $L=\Lambda_N$, yield
\[
\Big(\E\big[\Tr C_N^{2k_N}\big]\Big)^{1/(2k_N)}
 \leq C N^{1/(2k_N)}
       \bigl(e^{-c\Lambda_N^\alpha}+\sigstar k_N^{1/\alpha}\bigr)=o(1).
\]
Markov's inequality gives $\Pp(\norm{C_N}_{\op}>u)\leq N^{-D}$ for every fixed $u,D>0$ and all sufficiently large $N$. The diagonal error is at most $\norm{\operatorname{diag}H_N}_{\op}+2\sigstar\Lambda_N$; \eqref{eq:diag-SW} gives the same bound for this term. Consequently,
\begin{equation}\label{eq:core-approximation-superpoly}
 \Pp(\norm{H_N-U_N}_{\op}>u)\leq N^{-D}
\end{equation}
for every fixed $u,D>0$ and all sufficiently large $N$.

Taking $D = 2$ and intersecting over a countable sequence $u \downarrow 0$,
the Borel--Cantelli lemma gives $\|H_N - U_N\|_{\op} \to 0$ almost surely.
By \cref{prop:SW-edge}, $\|H_N\|_{\op} \to 2$ almost surely,
so $\|U_N\|_{\op}$ is eventually bounded. The same telescoping bound thus gives $\norm{H_N^t-U_N^t}_{\op}\to0 $ almost surely for every fixed $t\geq1$. Combining this with \cref{prop:bounded-isotropic} applied to $U_N$ and using linearity completes the proof.
\end{proof}

\subsection{Proof of \texorpdfstring{\cref{thm:main}}{the main theorem}}
\begin{proof}
Under \eqref{ass:FM}, \cref{prop:finite-edge,lem:as-semicircle,prop:IGL} yield \eqref{eq:SC}--\eqref{eq:IGL} in probability. %
Under \eqref{ass:SW}, \cref{prop:SW-edge,lem:as-semicircle,prop:IGL}
yield \eqref{eq:SC}--\eqref{eq:IGL} almost surely. The proof is completed by applying \cref{thm:abstract-transfer}
in the corresponding mode of convergence.
\end{proof}

\section*{Acknowledgments}
This work was supported by the National Natural Science Foundation of China
under Grants \#12371157. We would like to thank Zhonggen Su and Jun Yin for valuable discussions, and Ramon van Handel for useful discussions on inhomogeneous random matrices and matrix concentration inequalities.

\appendix
\section{Sharpness and convergence modes}\label{sec:finite-order}

We use block diagonal  matrices to show that the profile conditions are sharp and that the finite-moment assumptions do not in general give almost-sure convergence. Related block-diagonal constructions can be found in \cite[Theorem~1.3 and Section~3]{ASTY}.  Let $1\leq d_N\leq N$ be integers with $d_N\to\infty$. Partition the coordinates into $\lfloor N/d_N\rfloor$ blocks: each of size $d_N$, except possibly the last, whose size lies between $d_N$ and $2d_N-1$. Set
\begin{equation}\label{eq:block-profile}
 \sigma_{ij}=
 \begin{cases}
  m^{-1/2},&i,j\text{ belong to the same block of size }m,\\
  0,&i,j\text{ belong to different blocks}.
 \end{cases}
\end{equation}
Note that, by construction, $\sum_j \sigma_{ij}^2 = 1$ and $\sigstar = d_N^{-1/2}$.

\begin{example}[Sub-Weibull]\label{ex:SW-sharpness}
For every $0<\alpha\leq2$, there exists a real symmetric matrix $H_N$ satisfying (\ref{ass:noise})--(\ref{ass:profile})  and \eqref{eq:SW-norm} such that
\[
 0<\lim_{N\to\infty}\sigstar(\log N)^{1/\alpha}<\infty,
 \qquad
 \liminf_{N\to\infty}\norm{H_N}_{\op}\geq3
 \quad\text{almost surely}.
\]
Thus the condition \eqref{eq:SW-scale} is sharp, even for convergence in probability.
\end{example}

\begin{proof}
Let $\xi_{\alpha}$ be a real symmetric Weibull variable, normalized by $\E[\xi_{\alpha}^2]=1$, and let $c_\alpha>0$ be the constant such that
\[
 \Pp(|\xi_{\alpha}|>t)=\exp(-c_\alpha t^\alpha),\qquad t\geq0.
\] %
We choose $b>0$ small enough such that $c_\alpha3^\alpha(2b)^{\alpha/2}\leq1/2$, and consider the block diagonal variance profile defined in \eqref{eq:block-profile} with
\[
 d_N=\left\lfloor b(\log N)^{2/\alpha}\right\rfloor.
\]
For all sufficiently large $N$, let the diagonal entries $W_{ii}$ be i.i.d. copies of a symmetric Bernoulli random variable taking values in $\{-\sqrt{2},\sqrt{2}\}$, and take the off-diagonal upper-triangular entries $\{W_{ij}:i<j\}$ to be i.i.d. copies of $\xi_\alpha$, independent of the diagonal. Consider $H_N$ as in \eqref{eq:model-H}. Then (\ref{ass:noise}) and  \eqref{eq:SW-norm} hold, and
\[
 \lim_{N \to \infty}\sigstar(\log N)^{1/\alpha}=b^{-1/2}.
\]
For any off-diagonal entry in a block of size $m<2d_N$,
\[
\Pp\big(|H_{ij}|>3\big)=\Pp\big(|\xi_\alpha| \ge 3\sqrt{m}\big)
 =\exp\!\big(-c_\alpha3^\alpha m^{\alpha/2}\big)
 \geq N^{-1/2}.
\]
By independence and the bound $\norm{H_N}_{\op}\geq\max_{i,j}|H_{ij}|$, we thus obtain
\[
\Pp\big(\norm{H_N}_{\op}\leq3\big)
 \leq \prod_{i<j}\Pp\big(|H_{ij}| \le 3\big) \le \exp\!\Big(-\frac{d_N-1}{2}\sqrt{N}\Big).
\]
Since the bound is summable, the Borel-Cantelli lemma implies that $\norm{H_N}_{\op}>3$  almost surely.
\end{proof}

\begin{example}[Finite moments]\label{ex:FM-sharpness}
The following real symmetric matrix models satisfy \eqref{ass:noise}--\eqref{ass:profile}, with entry distributions that may depend on 
$N$.
\begin{enumerate}[label=(\roman*),ref=\roman*,leftmargin=*]
\item\label{ex:FM-sharpness-scale} 

Fix $p\geq4$. For any sequence $h_N\to\infty$ with $h_N=o(N)$, there exists a model satisfying \eqref{eq:FM-UI} such that 
\[
 \sigstar\longrightarrow0,
 \qquad N(\sigstar)^{p-2}\sim h_N,
 \qquad
 \liminf_{N\to\infty}\Pp \big(\norm{H_N}_{\op}\geq3\big)
 \geq1-e^{-1/2}>0.
\]
Thus, the condition \eqref{eq:FM-scale} cannot, in general, be weakened to allow $N(\sigstar)^{p-2}\to \infty$, regardless of how slowly this quantity diverges.

\item\label{ex:FM-sharpness-as} There exists a model with variance profile $\sigma_{ij}=N^{-1/2}$ satisfying \eqref{ass:FM} with 
$p=4$ such that, almost surely,
\[
 \limsup_{N\to\infty}\bigp{e_1}{H_N^2e_1}\geq9,
 \qquad
 \limsup_{N\to\infty}\norm{H_N}_{\op}\geq3,
\]
where $e_1$ is the first coordinate vector. Thus neither the degree-two isotropic law \cref{prop:IGL} nor the norm limit \cref{prop:finite-edge} holds almost surely.
\end{enumerate}
\end{example}

\begin{proof}
For all sufficiently large $N$, take $0<\tau_N\leq1$. For an off-diagonal entry in a block of size $m$, put
\[
 \delta=\frac{\tau_N}{Nm},\qquad
 u=\left(\frac{1-9\tau_N/N}{1-\delta}\right)^{1/2},
\]
For each block of size $m$, we choose the off-diagonal upper-triangular entries
$W_{ij}$, $i<j$, in the block independently according to the distribution
\[
 \Pp(W_{ij}=3\sqrt m)=\Pp(W_{ij}=-3\sqrt m)=\frac\delta2,
 \qquad
 \Pp(W_{ij}=u)=\Pp(W_{ij}=-u)=\frac{1-\delta}{2}.
\]
Take independent symmetric $\{-\sqrt2,\sqrt2\}$ diagonal entries and independent symmetric $\{-1,1\}$ off-diagonal upper-triangular entries between different blocks, all independent of the within-block entries. 
We then consider $H_N$ as in \eqref{eq:model-H}. Note that each $W_{ij}$ is centered, and for each within-block off-diagonal entry,
\[
 \E[W_{ij}^2]=9m\delta+(1-\delta)u^2=1.
\]
Moreover, the rare atoms contribute
\[
 \delta(3\sqrt m)^p
 =\frac{3^p\tau_N m^{(p-2)/2}}{N}
 \leq\frac{3^p\tau_N(2d_N)^{(p-2)/2}}{N}
\]
to the $p$th moment, and all other atoms are bounded by $\sqrt2$. Thus \eqref{eq:FM-UI} holds whenever
\begin{equation}\label{equ:uniform-integrable}
 \frac{\tau_N d_N^{(p-2)/2}}{N}\longrightarrow0.
\end{equation}

For part~\textup{(\ref{ex:FM-sharpness-scale})}, choose
\[
 \tau_N=1,\qquad
 d_N=\left\lfloor (N/h_N)^{2/(p-2)}\right\rfloor.
\]
Then $d_N\to\infty$, $d_N=o(N)$, and $N(\sigstar)^{p-2}\sim h_N$, so \eqref{equ:uniform-integrable} holds. Let $E_N$ be the event that at least one rare off-diagonal atom occurs within a block. The sum of the rare-atom probabilities is
\[
 \sum_{\text{blocks of size }m}\frac{m(m-1)}{2Nm}
 =\frac{N-\lfloor N/d_N\rfloor}{2N}
 \longrightarrow\frac12,
\]
and the largest individual probability is at most $1/(Nd_N)\to0$. Independence and $\log(1-x)=-x+O(x^2)$ give
\[
 \Pp(E_N^c)
 =\prod_{\text{blocks of size }m}
   \left(1-\frac1{Nm}\right)^{m(m-1)/2}
 \longrightarrow \frac{1}{\sqrt{e}}.
\]
On $E_N$, at least one entry of $H_N$ has modulus $3$, so $\norm{H_N}_{\op}\geq3$. This proves part~\textup{(\ref{ex:FM-sharpness-scale})}.

For part~\textup{(\ref{ex:FM-sharpness-as})}, take $p=4$, $d_N=N$, and $\tau_N=1/\log(eN)$ (so the profile is flat and $N(\sigstar)^2=1$). Realize the matrices $H_N$ independently across $N$. %
Then \eqref{equ:uniform-integrable} is satisfied, and (\ref{ass:FM}) holds with $p=4$. Let $F_N$ be the event that the first row contains a rare off-diagonal entry.  On $F_N$, we have
\[
 \bigp{e_1}{H_N^2e_1}=\sum_j|H_{1j}|^2\geq9,
 \qquad
 \norm{H_N}_{\op}\geq\norm{H_Ne_1}_2\geq3.
\] 
Moreover, a direct computation gives
\[
 \Pp(F_N)=1-\left(1-\frac1{N^2\log(eN)}\right)^{N-1}
 \sim\frac1{N\log(eN)}.
\]
Hence, $\sum_N\Pp(F_N)=\infty$. The events $F_N$ are independent because the matrices $H_N$ are constructed independently across $N$. The second Borel--Cantelli lemma~\cite{Billingsley} yields $\Pp(F_N \text{ i.o.})=1$, and the desired conclusion follows.
\end{proof}

\begingroup
\small
\newcommand{\etalchar}[1]{$^{#1}$}

\endgroup

\end{document}